\documentclass[requno,12pt]{amsart}

\usepackage{amssymb}

\usepackage{enumitem}

\usepackage{graphicx}
\usepackage{mathrsfs}
\usepackage{pifont}
\usepackage{pgfplots}
\usepgfplotslibrary{fillbetween}
\makeatletter
\@namedef{subjclassname@2020}{%
  \textup{2020} Mathematics Subject Classification}
\makeatother

\usepackage[width=16.00cm, height=21.00cm, left=2.00cm, right=2.00cm, top=4cm, bottom=2cm]{geometry}

\usepackage[T1]{fontenc}

\newtheorem{theorem}{Theorem}[section]
\newtheorem{corollary}[theorem]{Corollary}

\newtheorem{proposition}[theorem]{Proposition}

\theoremstyle{definition}
\newtheorem{definition}[theorem]{Definition}
\newtheorem{remark}[theorem]{Remark}
\newtheorem{example}[theorem]{Example}

\numberwithin{equation}{section}

\def\cb{{\mathbb{C}}}

\def\cb{{\mathbb{C}}}

\newcommand{\ds}{\displaystyle}

\def\bR{{\Bbb R}}
\def\1{{\mathchoice {\rm 1\mskip-4mu l} {\rm 1\mskip-4mu l}{\rm 1\mskip-4.5mu l} {\rm 1\mskip-5mu l}}}
\DeclareMathOperator{\Sh}{Sh}
\DeclareMathOperator{\loc}{loc}

\title[Slices and $m$-Lelong numbers of  $m$-subharmonic functions]{Slices and  $m$-Lelong numbers  of  $m$-subharmonic functions}

\author[H. Khedhiri and N. Ghiloufi]{ Hedi Khedhiri\;$^{1}$ and Noureddine Ghiloufi\;$^{2}$}
\address{$^{1}$ 
  University of Monastir\\
 Preparatory Institute for Engineering Studies, LR18ES16 Research Laboratory, Monastir Tunisia }
  \address{$^{2}$ University of Gabes\\ Faculty of Sciences of Gabes\\ LR17ES11 Mathematics and Applications laboratory\\ 6072, Gabes, Tunisia.}
\email{$^{1}$ khediri\_h@yahoo.fr , hedi.khedhiri.math@gmail.com }
\email{$^{2}$ noureddine.ghiloufi@fsg.rnu.tn}

\subjclass[2020]{32U05, 31C10, 32U25, 32C30, 32U40.}

\keywords{$m$-subharmonic function, $m$-positive current,  slice, $m$-Lelong number}
\begin{document}
\begin{abstract}
We investigate slicing properties of $m$-subharmonic functions in product domains $\Omega = \Omega' \times \Omega'' \subset \mathbb{C}^n = \mathbb{C}^p \times \mathbb{C}^{n-p}$, where $p, m, n$ are integers satisfying $1 \leq p \leq m-1 < n-1$.\\
Given an $m$-subharmonic function $v$ on $\Omega$, we prove the existence of a pluripolar subset $E \subset \Omega'$ such that, for every $x' \in \Omega' \smallsetminus E$, the slice $v_{|\{x'\}\times \mathbb{C}^{n-p}}$ is well defined and $(m - q_{m,p})$-subharmonic on $\Omega''$, where $q_{m,p}$ denotes the smallest integer greater than or equal to $\frac{mp}{n}$.\\
Moreover, we show that, outside a negligible subset of $\Omega'$, the $m$-Lelong number of $v$ at $(x', x'')$ coincides, up to a multiplicative constant, with the $(m - q_{m,p})$-Lelong number of the slice $v_{|\{x'\}\times \Omega''}$ at $x''$.
\end{abstract}

\maketitle

\section{Introduction and overview of the results}
The motivation for the present work stems in part from the relationship between the B{\l}ocki conjecture (see \cite{Bl}) on the integrability index of an $m$-subharmonic function at a point $a$ and its $m$-Lelong number at that point. More precisely, it is observed in \cite{Ben-Gh} that the resolution of the conjecture can be reduced to the case of points where the $m$-Lelong number is nonzero, and in such a case, the conjecture is proved for $m=1$.\\
Our aim is therefore to reduce the general case to lower values of $m$. To this end, we employ slicing techniques in our approach.\\

In this paper, we focus on the existence and structural properties of slices of $m$-subharmonic functions, as well as on the behavior of their associated $m$-Lelong numbers along lower-dimensional complex subspaces.\\

In the plurisubharmonic case $(m=n)$, slicing theory is classical, with well-developed connections to Lelong numbers and complex Monge--Amp\`ere measures.
In contrast, for general $m$-subharmonic functions $(1\leq m<n)$, slicing theory is far less developed, mainly because
$m$-subharmonicity is not preserved under arbitrary holomorphic coordinate changes.\\

Our results extend and complement several earlier works \cite{Bl, Ben-Gh, BMS-El, HK1, HK2, HK4, HK8, HK2bbis}, and contribute to a deeper understanding of $m$-subharmonic functions and their singularities. \\
In \cite{Bl}, the focus is on weak solutions to the complex Hessian equation and on integrability indices, without addressing slicing phenomena. 
The results of \cite{Ben-Gh} concern $m$-subharmonic functions in specific geometric settings, but do not involve lower-dimensional slices. 
In \cite{BMS-El}, slicing is studied in the plurisubharmonic case, whereas our analysis applies to the more general range $1 \leq p < m < n$.\\ 
In \cite{HK2, HK8}, the author establishes slicing properties for certain negative plurisubharmonic currents with small support, particularly those associated with analytic subsets. He studies the integrability of their coefficients and of their slices, but without relating the slice structure to Lelong numbers.\\

The results of this paper provide a unified treatment of the existence of slices, pluripolar exceptional sets, monotonicity formulas, and the relationship between the $m$-Lelong numbers of an $m$-subharmonic function $v$ and those of its slices.\\

Let $n$, $m$, and $p$ be integers satisfying $1 \leq p < m < n$, and let
$$
\Omega = \Omega' \times \Omega'' \subset \mathbb{C}^n = \mathbb{C}^p \times \mathbb{C}^{n-p}, \quad z = (z', z''), \quad z' \in \Omega', \quad z'' \in \Omega'',
$$
be a bounded domain in $\mathbb{C}^n$. We denote by $\pi : \Omega \to \Omega'$ the first canonical projection  defined by $\pi(z', z'') = z'$.\\

Theorem~\ref{t3.3} is our first main result. It is a slicing theorem for $m$-subharmonic functions that establishes the existence and structure of their slices. More precisely, we show that for any $m$-subharmonic function $v$, there exists a pluripolar subset $E \subset \Omega'$ such that, for every $x' \in \Omega' \smallsetminus E$, the slice $\langle v, \pi, x' \rangle$ is well defined and coincides with the natural restriction $v_{|\{x'\} \times \Omega''}$. Moreover, the associated currents
$$
\langle dd^c v, \pi, x' \rangle = dd^c \bigl( v_{|\{x'\} \times \Omega''} \bigr)
$$
inherit $(m - q)$-subharmonicity properties for every integer $q \geq \frac{mp}{n}$.\\

In Proposition~\ref{p3.1}, we introduce a directional $(m - q)$-Lelong function $\nu_{m-q}(dd^c v, \mathcal B, x'', \cdot)$ associated with the $m$-positive current $dd^c v$ at a point $x'' \in \Omega''$, where $\mathcal B$ is a bounded Borel subset of $\Omega'$. We prove that this function is non-decreasing, which ensures the existence of the directional $(m - q)$-Lelong number $\nu_{m-q}(dd^c v, \mathcal B, x'')$.\\
The proof of this proposition relies on several techniques that are of independent interest and yields new estimates for the Monge--Amp\`ere operator. These estimates are then used to establish the second main result of the paper.\\
More precisely, Theorem~\ref{t4.4} shows that, up to a positive multiplicative constant, the $m$-Lelong number of $v$ coincides with the $(m - q_{m,p})$-Lelong number of the slice $v_{|\{x'\} \times \Omega''}$ in the generic case where $q_{m,p} = \frac{mp}{n}$. When $\frac{mp}{n}$ is not an integer, and consequently $q_{m,p} > \frac{mp}{n}$, we prove that the $(m - q_{m,p})$-Lelong number of the slice actually vanishes. \\

The paper is organized as follows. Section~1 provides an introduction and presents the main results. Section~2 contains the necessary preliminaries. In Section~3, we establish several slicing properties of $m$-subharmonic functions and prove the first main result (Theorem~\ref{t3.3}). Finally, Section~4 is devoted to the study of $m$-Lelong numbers on slices: we prove their existence, derive their main properties, and establish sharp estimates leading to the second main result (Theorem~\ref{t4.4}).

\section{Preliminaries}
We use the standard notation for the differential operators
\begin{center}
$d = \partial + \bar{\partial} \quad \text{and} \quad d^c = i(\bar{\partial} - \partial), \quad \text{so that} \quad dd^c = 2i\,\partial \bar{\partial}.$
\end{center}

The K\"ahler form on $\mathbb{C}^n$, denoted by $\beta := \beta(t) = dd^c |t|^2$, decomposes as
$$
\beta(t) = \beta'(t') + \beta''(t''),
$$
where $\beta'$ and $\beta''$ denote the standard K\"ahler forms on $\mathbb{C}^p$ and $\mathbb{C}^{n-p}$, respectively.\\

For every $r > 0$ and $a \in \mathbb{C}^\ell$, we denote by $\mathbb{B}_\ell(a,r)$ the open ball in $\mathbb{C}^\ell$ centered at $a$ with radius $r$. In the particular case $a = 0$, we omit the center and write $\mathbb{B}_\ell(r)$ instead of $\mathbb{B}_\ell(0,r)$. \\

 The space $\mathscr D'_{k,k}(\Omega)$ of $(n-k,n-k)$-currents (i.e., currents of bidimension $(k,k)$) on $\Omega$,  defined as the dual of the space   $\mathscr D_{k,k}(\Omega)$ which consists of smooth, compactly supported $(k,k)$-forms  on $\Omega$.

  Following \cite{Wan, Ben-Gh}, we recall the $m$-positivity as follows:
  \begin{definition}
      \begin{enumerate}
      \item A  $(1,1)$-form $\alpha$ on $\Omega$ is said to be $m\text{-}$positive,  if  the wedge product $$\alpha^j\wedge\beta^{n-j}$$ defines a positive current for every $1\leq j\leq m$.
  \item  A $(q, q)$-form $\alpha$ on $\Omega$ is said to be strongly $m$-positive if $\alpha$ can be decomposed as $$\alpha=\sum_{j=1}^Na_j\alpha_{1,j}\wedge\dots\wedge\alpha_{q,j}$$
where $N=\binom{n}{q}$ and $\alpha_{1,j},\dots \alpha_{q,j}$ are $m$-positive $(1, 1)$-forms and $a_j\geq 0$ for every $j$.
\item  A current $T$ of bidimension $(k, k)$ on $\Omega$ with $m+k \geq n$ is said to be $m$-positive if $$\langle T\wedge \beta^{n-m}, \alpha \rangle\geq 0$$ for every strongly $m$-positive $(m +k-n, m +k-n)$-test form $\alpha$ on $\Omega$.
\item A current $T$ of bidimension $(k, k)$ on $\Omega$ with $m +k-1 \geq n$ is said to be $m$-subharmonic ($m\text{-}\Sh$ for short) if $dd^cT$ is an $m$-positive current.  
  \end{enumerate}
  \end{definition}
  
\begin{definition}
    A function $v:\Omega\longrightarrow[-\infty,+\infty[$ is said to be subharmonic on the domain $\Omega$ if $v\not\equiv-\infty$ and satisfies:
    \begin{enumerate}
        \item $v$ is upper semi-continuous on $\Omega$ and 
        \item for every $a\in\Omega$ and $0<r<\mathrm{dist}(a,\partial \Omega)$, 
        $$v(a)\leq \mathcal M(v,\mathbb S(a,r))=\frac{(n-1)!}{2\pi^nr^{2n-1}}\int_{\mathbb S(a,r)}v(z)d\sigma_n(z)$$
        or equivalently $$v(a)\leq\mathcal M(v,\mathbb B(a,r))=\frac{n!}{\pi^nr^{2n}}\int_{\mathbb B(a,r)}v(z)dV_n(z)$$ where  $\mathcal M(v,\mathbb S(a,r))$   and  $\mathcal M(v,\mathbb B(a,r))$  are the mean values of $v$ over the sphere and the ball, respectively.
    \end{enumerate}
    We set $Sh(\Omega)$ to be the set of subharmonic functions on $\Omega$.
\end{definition}
To describe the cone of $m$-subharmonic functions, we first recall some algebraic notions.\\

For any $\lambda = (\lambda_1,\dots,\lambda_n) \in \mathbb{R}^n$, we denote by $S_k(\lambda)$ the $k$-th elementary symmetric polynomial, defined by
$$S_k(\lambda) := \sum_{1 \leq j_1 < \cdots < j_k \leq n} \lambda_{j_1}\cdots \lambda_{j_k}.$$
For each $1 \leq m \leq n$, the G{\aa}rding cone $\Gamma_m$  (see \cite{Ga, S-A}) is defined by
$$\Gamma_m := \{ \lambda \in \mathbb{R}^n \; ; \; S_k(\lambda) \geq 0 \text{ for all } k = 1,\dots,m \}.$$
It is well known that $\Gamma_m$ is a convex cone in $\mathbb{R}^n$ and that the following inclusions hold:
$$\Gamma_n \subset \Gamma_{n-1} \subset \cdots \subset \Gamma_1.$$

As an example, let $v$ be a $\mathcal{C}^2$ subharmonic function on $\Omega$. Then its complex Hessian matrix
$$A(v) = \left(\frac{\partial^2 v}{\partial z_j \partial \overline{z}_\ell}\right)_{1 \leq j,\ell \leq n}$$
is Hermitian with nonnegative trace. In particular, its eigenvalue vector $\lambda(v) = (\lambda_1,\dots,\lambda_n)$ belongs to $\Gamma_1$.\\
Moreover, one has the identity (see \cite{S-A})
$$\binom{n}{k}(dd^c v)^k \wedge \beta^{n-k} = S_k(\lambda(v))\, \beta^n.$$
This shows that the positivity of the currents $(dd^c v)^k \wedge \beta^{n-k}$ is equivalent to the condition $\lambda(v) \in \Gamma_k$.\\
Consequently, the class of subharmonic functions can be decomposed into subclasses $\mathrm{Sh}_m(\Omega)$, where $v \in \mathrm{Sh}_m(\Omega)$ means that $v$ is subharmonic on $\Omega$ and $$(dd^c v)^k \wedge \beta^{n-k} \geq 0\qquad \text{for all}\quad 1 \leq k \leq m$$ in the sense of currents.
In particular, if $\mathrm{Psh}(\Omega)$ denotes the set of plurisubharmonic functions on $\Omega$, then
$$\mathrm{Psh}(\Omega) = \mathrm{Sh}_n(\Omega) \subsetneq \cdots \subsetneq \mathrm{Sh}_1(\Omega) = \mathrm{Sh}(\Omega).$$
\begin{remark}
\begin{enumerate}
    \item If $v$ is an $m$-subharmonic function on $\Omega$, then it is locally integrable and  $dd^cv$ is an $m$-positive current on $\Omega$.
    \item The fundamental solution of the complex Hessian equation
$$(dd^c u)^m \wedge \beta^{n-m} = \delta_0$$
is given by the $m$-subharmonic function $\Phi_{n,m}$ defined on $\mathbb{C}^n$ by $\Phi_{n,m}(z)=\phi_{n,m}(|z|^2)$ where 
$$\phi_{n,m}(r) := -\frac{1}{\left(\frac{n}{m}-1\right)r^{\frac{n}{m}-1}}.$$
This function plays a central role in our analysis.
\end{enumerate}    
\end{remark}

$m$-subharmonic functions have been extensively studied and are now well understood. In contrast, the notion of $m$-positive currents is less developed and requires deeper tools from complex analysis and geometry.\\

One of the most fundamental notions in complex analysis and geometry is that of Lelong numbers, originally introduced by Lelong in \cite{Le} for positive currents (the case $m = n$), and later significantly developed by several authors, notably Demailly, who introduced the generalized Lelong numbers (now commonly referred to as Lelong--Demailly numbers). These invariants measure the singularities of plurisubharmonic functions and analytic sets.\\

For $1 \leq m < n$, the authors of \cite{Ben-Gh} study $m$-Lelong numbers associated with $m$-positive currents. Some of their results will be used in this paper; therefore, we briefly recall the main ideas.\\

Let $T$ be an $m$-positive current of bidimension $(k,k)$ on $\Omega$, and let $a \in \Omega$. For $0 < r < \mathrm{dist}(a,\partial\Omega)$, the $m$-Lelong function of $T$ at $a$ is defined by
$$
\nu_m(T,a;r) := \frac{1}{r^{\frac{2n}{m}(m+k-n)}} \int_{\mathbb{B}_n(a,r)} T \wedge \beta^k.
$$
The $m$-Lelong number of $T$ at $a$, when it exists, is given by
$$
\nu_m(T,a) := \lim_{r \to 0} \nu_m(T,a;r).
$$
It is shown in \cite{Ben-Gh} that if $T$ is an $m$-positive $m$-subharmonic current on $\Omega$, then the $m$-Lelong number of $T$ exists at every point $a \in \Omega$. 

The special case where $T = dd^c v$ for some $m$-subharmonic function $v$ on $\Omega$ was first studied by Hung-Vuong \cite{H-V}. This result was later reproved in \cite{Ben-Gh}, where a precise relationship was established between the $m$-Lelong number $\nu_m(dd^c v, a)$ and the mean values of $v$ over the ball $\mathbb{B}_n(a,r)$ and the sphere $\mathbb{S}_n(a,r)$. More precisely,
$$
\nu_m(dd^c v,a)
= \lim_{r\to0}\frac{2\,\mathcal{M}(v,\mathbb{S}_n(a,r))}{\phi_{m,n}(r^2)}
= 2\left(1+\frac{1}{n}-\frac{1}{m}\right)
\lim_{r\to0}\frac{\mathcal{M}(v,\mathbb{B}_n(a,r))}{\phi_{m,n}(r^2)}.
$$

In particular, if $v$ is bounded near $a$, then $\nu_m(dd^c v,a)=0$. Since $v$ is $q$-subharmonic for every $1 \leq q \leq m$, it follows that
$$
\nu_q(dd^c v, a) = 0 \quad \text{for all } 1 \leq q < m.
$$

  To introduce the main topic of this paper, we begin by recalling the notion of slices. 
Let $\alpha_1$ (resp. $\alpha_2$) be a nonnegative function with compact support in $\mathbb{B}_p$ (resp. in $\mathbb{B}_{n-p}$) such that
$$
\int_{\mathbb{C}^p} \alpha_1 \, dV_p = \int_{\mathbb{C}^{n-p}} \alpha_2 \, dV_{n-p} = 1,
$$
where $dV_k$ denotes the Lebesgue measure on $\mathbb{C}^k$.\\

For $\varepsilon > 0$, we set
$$
\alpha_{1,\varepsilon}(z') = \frac{1}{\varepsilon^{2p}} \alpha_1\!\left(\frac{z'}{\varepsilon}\right),
\qquad
\alpha_{2,\varepsilon}(z'') = \frac{1}{\varepsilon^{2(n-p)}} \alpha_2\!\left(\frac{z''}{\varepsilon}\right).
$$

Let $R$ be a current of bidimension $(k,k)$ on $\Omega$, and let 
$\pi : \mathbb{C}^p \times \mathbb{C}^{n-p} \to \mathbb{C}^p$ be the canonical projection. 
Fix a point $a \in \Omega'$ and assume that $p < k \leq n$. 
Following \cite{BMS-El}, the slice of $R$ by $\pi$ at $a$ is defined, when it exists, as the weak limit in $\mathscr{D}'_{(k-p,k-p)}(\Omega)$ of
$$
\int_{\mathbb{B}_p(a,\varepsilon)\times \Omega''}
R \wedge \pi^*\!\left(
\frac{1}{\varepsilon^{2p}} \alpha_1\!\left(\frac{z'-a}{\varepsilon}\right) (dd^c |z'|^2)^p
\right) \wedge \varphi,
$$
as $\varepsilon \to 0$, for any test form $\varphi \in \mathscr{D}_{(k-p,k-p)}(\Omega)$. 
This slice is denoted by $\langle R,\pi,a\rangle_{\alpha_1}$.

The notion of slice $\langle R,\pi,a\rangle_{\alpha_1}$, originally introduced in \cite{BM-El}, was further developed in \cite{HK2, HK4, HK8} by replacing the standard quadratic weight $z' \mapsto |z'|^2$ with a locally bounded plurisubharmonic function depending only on $z' \in \mathbb{C}^p$. 
In the particular case where
$$
\varphi(z') = |z'|^2 \qquad \text{and} \qquad \alpha_1 = \frac{p!}{\pi^p}\mathbf{1}_{\mathbb{B}_p},
$$
this construction reduces to the classical definition of slices due to Federer \cite{Fe}. 
Similarly, when $\varphi(z') = |z'|^2$ and $\alpha_1 \in \mathscr{D}(\mathbb{C}^p)$, one recovers the notion of slices introduced by Harvey and Shiffman \cite{H-S}.

To avoid additional technical difficulties, we restrict ourselves to the framework of \cite{BM-El}, using the standard quadratic weight, and do not consider the more general slicing associated with plurisubharmonic weights as in \cite{HK2, HK4, HK8}.

\section{ Slicing of  $m$-subharmonic functions}
In this section, we study slicing properties of $m$-subharmonic functions, extending classical results from pluripotential theory to this more general setting. Slicing provides an effective tool for analyzing the behavior of such functions along lower-dimensional complex subspaces, in particular their restrictions and integrability properties.\\

The following result is a direct consequence of the local integrability of $m$-subharmonic functions. Although we are not aware of a specific reference in this context, its proof follows by a straightforward adaptation of the classical argument in \cite{BM-El}, and is therefore omitted.
\begin{proposition}\label{p3.1}
    Let $v\in \Sh_m(\Omega)$ be an $m$-subharmonic function. For $\varepsilon>0$, define   $\alpha_{\varepsilon}(z)=\alpha_{\varepsilon}(z',z'')=\alpha_{1,\varepsilon}(z')\alpha_{2,\varepsilon}(z'')$ and $v_{\varepsilon}:=v*\alpha_{\varepsilon}$. Then,
\begin{enumerate}
\item $\langle v_{\varepsilon},\pi,x'\rangle\;$ is well defined and  equals  $\;{v_{\varepsilon}}_{\mid\{x'\}\times\Omega''}$.
\item  $\lim_{\varepsilon\rightarrow0}\langle v_{\varepsilon},\pi,x'\rangle\;$ exists in $\;\mathscr D'(\Omega),\;$ if and only if $\;\langle v,\pi,x'\rangle\;$ exists.\\
In this case, we put $$\lim_{\varepsilon\rightarrow0}\langle v_{\varepsilon},\pi,x'\rangle=\langle v,\pi,x'\rangle=v_{\mid\{x'\}\times \Omega''}.$$
\end{enumerate}
    \end{proposition}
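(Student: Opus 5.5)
Part (1) is a direct computation. Part (2) reduces to interchanging two limits, controlled by local integrability of $v$ and Fubini.

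\textbf{Part (1).} Since $v\in L^1_{\loc}(\Omega)$, the regularization $v_\varepsilon=v*\alpha_\varepsilon$ is smooth on $\Omega_\varepsilon=\{z;\ \mathrm{dist}(z,\partial\Omega)>\varepsilon\}$. For a continuous function $w$, the slice at $x'$ is computed directly from the definition. Pair the approximating currents with a test form $\varphi\in\mathscr D_{(n-p,n-p)}$ and apply Fubini in $(z',z'')$:
$$\int w(z)\,\frac{1}{\delta^{2p}}\alpha_1\!\Big(\frac{z'-x'}{\delta}\Big)\beta'^p\wedge\varphi=\int_{\mathbb C^p}\frac{1}{\delta^{2p}}\alpha_1\!\Big(\frac{z'-x'}{\delta}\Big)\Big(\int_{\Omega''}w(z',\cdot)\varphi(z',\cdot)\Big)\beta'^p(z').$$
The inner function of $z'$ is continuous, by uniform continuity of $w\varphi$ on the compact support of $\varphi$. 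Since $\alpha_1$ has unit mass, the expression converges, as $\delta\to0$, to the pairing of $w_{|\{x'\}\times\Omega''}$ with $\varphi(x',\cdot)$, up to the normalizing constant coming from $\beta'^p=p!\,dV_p$ (absorbed in the convention). Applying this with $w=v_\varepsilon$ gives (1). The limit is independent of $\alpha_1$.

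\textbf{Part (2).} Set $S_\delta(w)$ for the $\delta$-approximant above, and let $T_\varepsilon=\langle v_\varepsilon,\pi,x'\rangle$. The idea is that convolution in $z'$ commutes with averaging in $z'$. Write $v_\varepsilon=(v*\alpha_{2,\varepsilon})*\alpha_{1,\varepsilon}$ and compare $S_\delta(v)$ with $T_\varepsilon$ when $\delta$ and $\varepsilon$ are of comparable size. More precisely, $T_\varepsilon$ paired with $\varphi$ equals
$$\int \alpha_{1,\varepsilon}(x'-y')\,\big\langle (v*\alpha_{2,\varepsilon})(y',\cdot),\varphi(x',\cdot)\big\rangle\, dV_p(y').$$
This is exactly the slice approximant of $v*\alpha_{2,\varepsilon}$ with kernel $\check\alpha_1$ at scale $\varepsilon$, tested against $\varphi(x',\cdot)$. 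If $\langle v,\pi,x'\rangle$ exists, one shows that the $z''$-convolution passes through the slice: convolution in $z''$ is a continuous operator on $\mathscr D'(\Omega'')$, and it commutes with the $z'$-averaging by Fubini. One then obtains $T_\varepsilon\to\langle v,\pi,x'\rangle$. The independence of the limit from the choice of $\alpha_1$, which is part of the definition of existence following \cite{BM-El}, handles the kernel $\check\alpha_1$. Conversely, if $\lim T_\varepsilon$ exists, the same identity read backwards shows that the slice approximants of $v$ converge.

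\textbf{Main obstacle.} The delicate step is justifying the interchange of the limit in $\delta$ (slicing) with the limit in $\varepsilon$ ($z''$-regularization). I would proceed as follows.
\begin{enumerate}
\item Use the uniform local $L^1$ bound on $\{v*\alpha_{2,\varepsilon}\}$ coming from local integrability of $v$. This gives equicontinuity of the pairings in $\varphi$.
\item Use the fact that the differences $v*\alpha_{2,\varepsilon}-v$ tend to zero in $L^1_{\loc}$.
\end{enumerate}
Combined with a diagonal argument, this should give the interchange.

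For the identification $\langle v,\pi,x'\rangle=v_{|\{x'\}\times\Omega''}$, I would use the upper semicontinuity and sub-mean-value property of $v$. The decreasing limit of the $v_\varepsilon$, after replacing the kernels by radial ones, recovers $v$ pointwise. On $\{x'\}\times\Omega''$ this identifies the distributional limit with the restriction whenever the restriction is locally integrable there.
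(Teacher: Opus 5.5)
The paper gives no proof of this proposition; it defers to the classical argument of \cite{BM-El}. So I assess your argument on its own terms.

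Part (1) is fine, and so is your Fubini identity. Put $\psi=\varphi(x',\cdot)$ and $S_\varepsilon(v)(z'')=\int\alpha_{1,\varepsilon}(x'-y')v(y',z'')\,dV_p(y')$. The identity then reads $\langle\langle v_\varepsilon,\pi,x'\rangle,\varphi\rangle=\langle S_\varepsilon(v),\psi*\check\alpha_{2,\varepsilon}\rangle$, and this is the right starting point.

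The gap is exactly the step you call the main obstacle: your mechanism for interchanging the limits does not work. A bound on $v*\alpha_{2,\varepsilon}$ in $L^1_{\loc}(\Omega)$, or the convergence $v*\alpha_{2,\varepsilon}\to v$ in $L^1_{\loc}(\Omega)$, carries no information about the fixed fiber $\{x'\}\times\Omega''$. The slicing weight $\alpha_{1,\varepsilon}(x'-\cdot)$ has sup norm of order $\varepsilon^{-2p}$, so such estimates only give $|\langle S_\varepsilon(w),\psi\rangle|\lesssim\varepsilon^{-2p}\|w\|_{L^1}$. A diagonal argument can at best give statements for almost every $x'$, never at a prescribed one. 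What you need is a bound on $S_\varepsilon(v)$ that is uniform in $\varepsilon$ at this particular $x'$.

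In the forward direction, that bound comes from the hypothesis itself. A convergent family in $\mathscr D'(\Omega'')$ is equicontinuous on each $\mathscr D_K$ (Banach--Steinhaus), and $\psi*\check\alpha_{2,\varepsilon}\to\psi$ in $\mathscr D(\Omega'')$. Hence $\langle S_\varepsilon(v),\psi*\check\alpha_{2,\varepsilon}\rangle$ has the same limit as $\langle S_\varepsilon(v),\psi\rangle$. Continuity of convolution by a \emph{fixed} kernel, which is what you invoke, is not enough, because the kernel shrinks together with the slicing scale.

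The converse cannot be obtained by reading the identity backwards. Convergence of $\langle S_\varepsilon(v),\psi*\check\alpha_{2,\varepsilon}\rangle$ for all $\psi$ tells you nothing about $\langle S_\varepsilon(v),\psi-\psi*\check\alpha_{2,\varepsilon}\rangle$ unless $\sup_\varepsilon\int_{K''}|S_\varepsilon(v)|<\infty$. For instance, large-amplitude oscillations in $z''$ at frequencies $\gg\varepsilon^{-1}$ are nearly annihilated by $\alpha_{2,\varepsilon}$ without converging themselves.

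Local integrability does not supply this bound; upper boundedness of $v$ does, and this is where $m$-subharmonicity genuinely enters. Take $\chi\geq0$ equal to $1$ on a neighbourhood of $K''$, and take $C\geq v$ on a compact neighbourhood of $\{x'\}\times\operatorname{supp}\chi$. Then $S_\varepsilon(C-v)\geq0$ and $\chi*\check\alpha_{2,\varepsilon}\geq\mathbf 1_{K''}$, so
$\int_{K''}S_\varepsilon(C-v)\leq C\int\chi\,dV-\int v_\varepsilon(x',\cdot)\chi\,dV$.
The right side is bounded because its last term converges. The same mass bound also justifies replacing $\varphi(y',\cdot)$ by $\varphi(x',\cdot)$ at cost $O(\varepsilon)$. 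Your identity makes that replacement silently when you compare with the slice in $\mathscr D'(\Omega)$.

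Two smaller points.
\begin{enumerate}
\item The reflected kernel $\check\alpha_1$ is harmless only if $\alpha_1$ is even. The paper's notation $\langle R,\pi,a\rangle_{\alpha_1}$ does not build kernel-independence into the definition.
\item Your identification with the restriction via radial kernels does not apply to the product kernels of the statement. Also, $y'\mapsto v(y',z'')$ need not be subharmonic when $mp<n$, so the $z'$-averages are not monotone. Instead, use the sandwich $C'v-(C'-1)\sup_K v\leq v_\varepsilon\leq\sup_K v$, with $C'=\|\alpha_1\|_\infty\|\alpha_2\|_\infty|\mathbb B_n(\sqrt2)|$. It follows from the sub-mean value property on $\mathbb B_n(z,\sqrt2\varepsilon)$. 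Combine it with pointwise convergence of $v_\varepsilon$ and dominated convergence on the fiber.
\end{enumerate}
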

In the following example, we compare the index of subharmonicity of a function with that of its slices. 
\begin{example} Let $a,b\in\mathbb R$ and  $v_{a,b}$ be the function given by
    $$v_{a,b}(z)=a|z_1|^2+b|z_2|^2+\sum_{j=3}^n|z_j|^2.$$
    We know that the $m$-subharmonicity of $v_{a,b}$ on $\mathbb C^n$ is equivalent to the conditions: $S_k(v_{a,b})\geq 0$ for every $1\leq k\leq m$ where $$S_k(v_{a,b})=\binom{n-2}{k}+\binom{n-2}{k-1}(a+b)+\binom{n-2}{k-2}ab.$$ 
    Now, if $\pi:\mathbb{C}^n\longrightarrow\mathbb C$ is the canonical projection given by $\pi(z)=z_1$ (i.e. $p=1$) then the slice $u_b:=\langle v_{a,b},\pi,0\rangle$ is given by $$u_b(z_2,\dots,z_n)=v_{a,b}(0,z_2,\dots,z_n)=b|z_2|^2+\sum_{j=3}^n|z_j|^2.$$
    Thus $u_b$ is $k$-subharmonic on $\mathbb C^{n-1}$ if and only if $$b\geq -\frac{n-k-1}{k}.$$
    Figure \ref{figslice} corresponds to the case $n=5$. The general case is analogous; the only difference concerns the coefficients of the curves, while their geometric structure remains unchanged.
    
\begin{figure}[h]
\centering
\begin{minipage}{0.45\textwidth}
\centering
\includegraphics[width=0.7\linewidth]{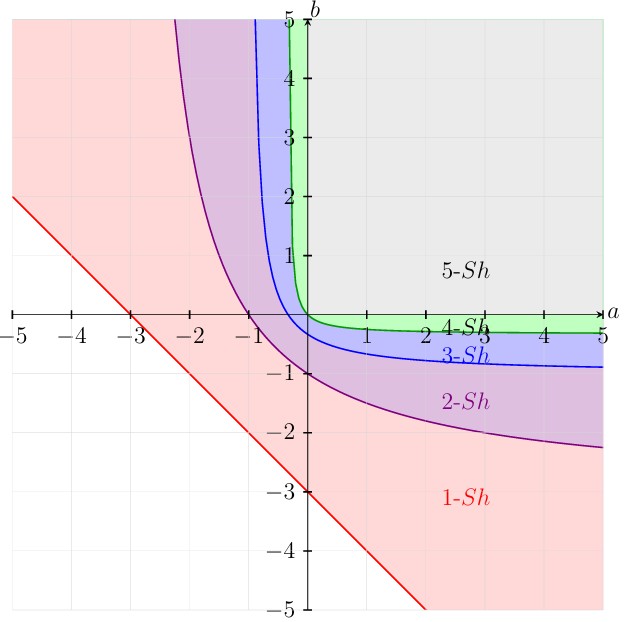}
\end{minipage}
\hfill
\begin{minipage}{0.45\textwidth}
\centering
\includegraphics[width=1\linewidth]{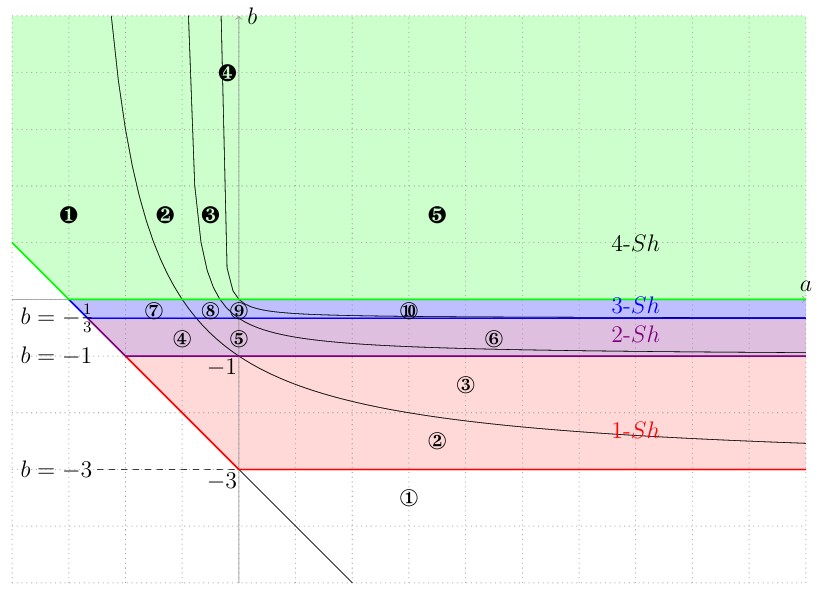}
\end{minipage}
\caption{$m$-subharmonicity of $v_{a,b}$ (left) and $k$-subharmonicity of $u_{b}$ (right) for $n=5$.}
\label{figslice}
\end{figure}

We claim that, if $m$ denotes the index of subharmonicity of $v_{a,b}$ and $k$ that of its slice $u_b$, then the set of parameters $(a,b)$ for which the function $v_{a,b}$ is subharmonic can be partitioned into $15$ distinct regions (See Figure \ref{figslice} at right). In each of these regions, the difference $k-(m-1)$ of the subharmonicity indices may be zero or positive. Table \ref{tab1} summarizes this classification. The motivation behind this example and the above classification is that the quantity $$k-(m-q_{m,p})$$ is always nonnegative. Establishing this property will be the aim of the main result.
\begin{table}[h]
    \centering
    \begin{tabular}{|l|ccccccccccccccc|}
    \hline
    Region& \ding{172}&\ding{173}&\ding{174}&\ding{175}&\ding{176}&\ding{177}&\ding{178}&\ding{179}& \ding{180}&\ding{181}&\ding{182}&\ding{183}&\ding{184}&\ding{185}&\ding{186}  \\
    \hline
    $m$& 1&1&2&1&2&3&1&2&3&4&1&2&3&4&5\\
    \hline
    $k$&not-Sh&1&1&2&2&2&3&3&3&3&4&4&4&4&4\\
    \hline
    $k-(m-1)$& & 1&0&2&1&0&3&2&1&0&4&3&2&1&0\\
    \hline
    \end{tabular}
    \caption{Comparison between the indices of subharmonicity of $v_{a,b}$ and its slice $u_b$.}
    \label{tab1}
\end{table}
\end{example}

We now establish a cutting theorem for $m$-subharmonic functions, showing that outside of a pluripolar exceptional set, the restriction is well defined and $(m-q_{m,p})$-subharmonic. This result plays a key role in the propagation of regularity and integrability properties across variables.
\begin{theorem}\label{t3.3}
Let $v \in \Sh_m(\Omega) $ be an $m$-subharmonic function and let $q_{m,p}$ be the smallest integer $q$ such that $q \geq \frac{mp}{n}$. We consider the set $$E=\left\{x'\in\cb^p:  \hbox{ the function } x''\longmapsto v(x',x'')\; \hbox{is not in}\;  L^1_{\loc}(\Omega'')\right\}.$$
 Then the following hold:
\begin{enumerate}
  \item For every point $\;x' \in \Omega' \smallsetminus E,\;$ the slice $\;\langle v, \pi, x' \rangle\;$ exists and  defines an $(m-q_{m,p})$-subharmonic function on $\Omega''$. We denote this restriction by $v_{|\{x'\} \times \Omega''}$
  \item For every $\;x' \in \Omega' \smallsetminus E,$ we have 
  $$
  \langle dd^c v, \pi, x' \rangle = dd^c\langle  v, \pi, x' \rangle =dd^c v_{|\{x'\} \times \Omega''}.
  $$
  \item The exceptional subset $\,E\,$ is pluripolar in $\;\mathbb{C}^p$.
\end{enumerate}
\end{theorem}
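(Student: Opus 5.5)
The proof would go through the regularizations $v_\varepsilon = v*\alpha_\varepsilon$ of Proposition~\ref{p3.1}. These are smooth $m$-subharmonic functions on a slightly shrunk domain and decrease to $v$ as $\varepsilon\to0$ once $\alpha_1,\alpha_2$ are radial. The argument has four steps: an algebraic statement about Hessians, an $L^1$ convergence argument for the restrictions, a commutation of $dd^c$ with slicing, and a pluripolarity argument for $E$.

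\textbf{Step 1 (algebraic core: restrictions of Hessians).} Let $A$ be a Hermitian $n\times n$ matrix with eigenvalues in $\Gamma_m$, and let $A''$ be its $(n-p)\times(n-p)$ block in the $z''$ variables. I claim that the eigenvalues of $A''$ lie in $\Gamma_{m-q}$ (as a cone in $\mathbb{R}^{n-p}$) for every integer $q\ge mp/n$. The plan is to argue one coordinate at a time.
\begin{enumerate}
\item By Cauchy interlacing together with the hyperbolicity (G{\aa}rding) of $S_k$, deleting one row and column of a matrix in $\Gamma_m(\mathbb{R}^N)$ yields a matrix in $\Gamma_{m'}(\mathbb{R}^{N-1})$ for a suitable $m'$.
\item To identify $m'$ I would use the standard characterization: $\lambda\in\Gamma_m$ iff $\sum\lambda_j\xi_j\ge0$ for all $\xi$ in the dual cone. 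That dual cone is generated by the vectors $(\partial S_m/\partial\lambda_j(\mu))_j$ with $\mu\in\Gamma_m$.
\item The model case is $A=\mathrm{diag}(-t,\dots,-t,1,\dots,1)$ with $p$ negative entries, tested on the dual generators. Here $A''=I$ is harmless. The real constraint comes instead from off-diagonal mixing, and the threshold $mp/n$ arises from the worst case, in which the negative eigenvalue $-(n-m)/m$ is spread evenly over all $n$ coordinates.
\end{enumerate}
The cleaner route I would try first is Gårding's inequality for mixed forms: $\binom{n}{k}\,\alpha^{k}\wedge\beta^{n-k}\ge0$ for $\alpha\in\Gamma_m$. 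Restricting to $\{z'=\mathrm{const}\}$ amounts to wedging with $\beta'^p$. I would then show that $\alpha^{j}\wedge\beta''^{\,n-p-j}\wedge\beta'^p\ge0$ for $j\le m-q$ by expanding $\beta'^p$ in terms of $\beta^p$ and averaging over unitary rotations (a Fourier-type averaging over $U(n)$). That averaging produces the factor $p/n$ and hence the condition $q\ge mp/n$.

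\textbf{Step 2 (convergence of slices).} For $x'\in\Omega'\smallsetminus E$, the restrictions $v_\varepsilon(x',\cdot)$ decrease to $v(x',\cdot)$, since $v$ is upper semicontinuous and is the decreasing limit of the $v_\varepsilon$. By Step 1 each $v_\varepsilon(x',\cdot)$ is smooth and $(m-q)$-subharmonic. Since $v(x',\cdot)\in L^1_{\loc}$, monotone convergence gives $L^1_{\loc}$ convergence. Hence the limit exists in $\mathscr D'$, so by Proposition~\ref{p3.1} the slice exists and equals $v_{|\{x'\}\times\Omega''}$. It is $(m-q)$-subharmonic because decreasing limits of such functions are again such, provided the limit is not identically $-\infty$; this is guaranteed by the $L^1_{\loc}$ hypothesis. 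This proves (1).

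\textbf{Step 3 (commuting $dd^c$ with slicing).} Assertion (2) follows because $dd^c$ commutes with restriction to $\{x'\}\times\Omega''$ for the smooth $v_\varepsilon$ (the pullback of $dd^c v_\varepsilon$ to the slice) and is weakly continuous. The slice $\langle dd^c v,\pi,x'\rangle$ is then the limit of $dd^c(v_{\varepsilon|\{x'\}\times\Omega''})$.

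\textbf{Step 4 (pluripolarity of $E$).} Fix a ball $K''\Subset\Omega''$ and set
$$w(x')=\int_{K''}v(x',x'')\,dV_{n-p}(x'').$$
I expect $w$ to be subharmonic in $x'$ in the appropriate sense, and in fact plurisubharmonic; the alternative of averaging over unitary rotations in $x'$ seems too weak. For plurisubharmonicity the plan is:
\begin{enumerate}
\item Replace $v$ by its average over the rotations $x''\mapsto e^{i\theta}$-type unitary group $U(n-p)$ acting on $x''$, which preserves $\Sh_m$. Upper semicontinuity and sub-mean-value properties pass to $w$.
\item Use $\int dd^c v\wedge\beta''^{\,n-p}$ pushed forward by $\pi$: the pushforward $\pi_*(v\,\beta''^{\,n-p})$ has $dd^c$ equal to $\pi_*(dd^c v\wedge\beta''^{\,n-p})$. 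This is a positive $(1,1)$-current on $\Omega'$ because $dd^c v\wedge\beta''^{\,n-p}\wedge\gamma\ge0$ for positive $(p-1,p-1)$-forms $\gamma$ in $z'$. That positivity is once more an instance of the Step 1 inequality, valid here since $p\le m-1$.
\end{enumerate}
It follows that $w$ is plurisubharmonic with $E\cap\{\cdot\}\subset\{w=-\infty\}$, exhausting by countably many $K''$. Hence $E$ is pluripolar, which is (3).

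The main obstacle is Step 1: the sharp linear-algebra statement that the $\Gamma_m$ condition restricts to $\Gamma_{m-q}$ on $(n-p)$-dimensional coordinate blocks exactly when $q\ge mp/n$. This is where the specific number $q_{m,p}$ enters, and the positivity used in Step 4 depends on it as well.
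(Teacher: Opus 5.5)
\textbf{Step 1 is false, and so is statement (1) as written.} Your Step 1 cannot be carried out because the linear-algebra claim is wrong, already for diagonal matrices. Take $(n,m,p)=(6,3,2)$. This satisfies $1\le p\le m-1<n-1$ and gives $q_{m,p}=1$, so $m-q_{m,p}=2$. Let
\[
v(z)=10\bigl(|z_1|^2+|z_2|^2\bigr)-2|z_3|^2+|z_4|^2+|z_5|^2+|z_6|^2 .
\]
Its Hessian eigenvalues $(10,10,-2,1,1,1)$ give $S_1=21$, $S_2=117$, $S_3=35$, so $v\in\Sh_3(\mathbb{C}^6)$. Every slice $z''\mapsto v(x',z'')$ has Hessian $\mathrm{diag}(-2,1,1,1)$, with $S_1=1$ and $S_2=-3<0$. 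So the slice is subharmonic but not $2$-subharmonic.

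The mechanism is not off-diagonal mixing or negativity spread over all coordinates, as you suggest in Step 1(c). It is large positive eigenvalues in the $p$ directions that the slice throws away. In general, let $\mu$ be interior to $\Gamma_{m-p}(\mathbb{R}^{n-p})$ with $\mu\notin\Gamma_{m-p+1}(\mathbb{R}^{n-p})$. Then $(M,\dots,M,\mu)$, with $p$ entries equal to $M$, lies in $\Gamma_m(\mathbb{R}^n)$ for $M$ large.

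So the best index valid for all $v$ is $m-p$. It follows by iterating a one-step fact. Deleting the first row and column of a Hermitian $A$ in the $\Gamma_m$-cone gives a matrix $A_{\hat1}$ in the $\Gamma_{m-1}$-cone. Indeed, with $E_{11}$ the first diagonal matrix unit, $S_k(A+tE_{11})=S_k(A)+t\,S_{k-1}(A_{\hat1})\ge0$ for all $t\ge0$, and one lets $t\to+\infty$.

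The value $\lfloor m(n-p)/n\rfloor=m-q_{m,p}$ is what one sees on radial functions such as $\Phi_{n,m}$ and the $v_\tau$ of Section~4. A $U(n)$-averaging argument can only detect rotation-averaged quantities, not a fixed coordinate block of a non-invariant $\alpha$. The paper's proof of (1) rests on the same false ``projection'' assertion for $\Gamma_m$, and it also conflates the eigenvalues of $A_\varepsilon$ with those of its block. So you are not missing an idea the paper supplies. Whenever $\lceil mp/n\rceil<p$, your Steps 2--3 can only deliver $(m-p)$-subharmonic slices.

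\textbf{Step 4 also fails.} The positivity $dd^cv\wedge\beta''^{\,n-p}\wedge\gamma\ge0$ is false already for $p=1$. Take $v=-t|z_1|^2+|z''|^2$ with $0<t\le (n-m)/m$. It is $m$-subharmonic, yet $dd^cv\wedge\beta''^{\,n-1}$ is a negative multiple of the volume form. Consequently $w(x')=-t\,\mathrm{vol}(K'')|x'|^2+C$ is strictly superharmonic, not plurisubharmonic.

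The paper's route to (3) needs no subharmonicity in $x'$ at all. It tests $E$ against the Bedford--Taylor capacity, i.e.\ against the measures $(dd^cu)^p$ with $u\in\mathrm{Psh}(\omega')$ and $0\le u\le1$. It then bounds
\[
\int_{\omega'}\Bigl(-\int_{\omega''}v(x',\cdot)\,\beta''^{\,n-p}\Bigr)(dd^cu)^p=\int_{\omega}(-v)\,\bigl(dd^c(u\circ\pi)\bigr)^p\wedge\beta''^{\,n-p}
\]
by a Chern--Levine--Nirenberg estimate. This is legitimate because the measure on the right equals $(dd^c(u\circ\pi))^p\wedge(dd^c|z''|^2)^{m-p}\wedge\beta^{n-m}$, a mixed Hessian measure of bounded $m$-subharmonic functions. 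Use monotone convergence in place of the paper's pointwise choice of $j_0(x')$. This gives $(dd^cu)^p(E)=0$ for all such $u$, hence $E$ is pluripolar. That is the argument to use for (3).
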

\begin{proof}
\textbf{Proof of statement (1).}\\ Fix the coordinates $\;(z_1,\dots z_n)=(z',z'')\;$ of $\cb^n\;$ near a point $\;x=(x',x'')\in(\Omega'\smallsetminus E)\times\Omega''$. Let $v_{\varepsilon}=v*\rho_{\varepsilon}$ be a regularization of $v$ by a smooth kernel  $(\rho_{\varepsilon})_{\varepsilon}$ so that
\begin{center} $v_{\varepsilon}\in\mathcal C^{\infty}(\Omega_{\varepsilon},\bR)\cap\Sh_m(\Omega_{\varepsilon})\quad$ where $\quad\Omega_{\varepsilon}=\{z\in\Omega:\mathrm{dist}(z,\Omega^c)>\varepsilon\}$.
\end{center}
Denote by  $\lambda_{1,\varepsilon}(z)\leq\dots\leq\lambda_{n,\varepsilon}(z)$  the eigenvalues of the complex Hessian matrix $$A_{\varepsilon}(z):=\left(\frac{\partial^2v_{\varepsilon}}{\partial z_j\partial\overline z_k}\right)_{1\leq j,k\leq n}$$  of $v_{\varepsilon}$ around $x$ and  let
$$\alpha_{\varepsilon}(z):=\alpha_{\varepsilon}=dd^cv_{\varepsilon}=\sum_{1\leq j,k\leq n}\frac{\partial^2v_{\varepsilon}}{\partial z_j\partial\overline z_k}idz_j\wedge d{\overline z}_k.$$
Since $v_{\varepsilon}$ is  $m$-subharmonic on  $\Omega_{\varepsilon}$,   we have
\begin{equation}\label{eq3.1}
\binom{n}{k}\alpha^k_\varepsilon\wedge\beta^{n-k}=S_k(A_{\varepsilon}(z))\beta^n,\quad\; k=1,\dots,m
\end{equation}
where $$S_k(A_{\varepsilon}(z))=\sum_{1\leq j_1<j_2<\dots<j_k\leq n}\lambda_{j_1,\varepsilon}\dots\lambda_{j_k,\varepsilon}\geq0\quad \forall\;k=1,\dots,m.$$
Which gives $$\lambda_\varepsilon:=(\lambda_{1,\varepsilon},\dots,\lambda_{n,\varepsilon})\in\Gamma_m$$ where $\Gamma_m$ is the G{\aa}ding cone.
In addition, since $v_{\varepsilon}$ is smooth, then an explicit computation of the slice  $\langle v_{\varepsilon},\pi,x'\rangle $, across $\cb^p$, at point $x'$,  shows that such a slice is well defined and is given by $$\varphi_{\varepsilon}(.):={v_{\varepsilon}}_{\mid\{x'\}\times\cb^{n-p}}$$  and satisfies
\begin{equation}\label{eq3.2}
dd^c\langle v_{\varepsilon},\pi,x'\rangle=\langle dd^cv_{\varepsilon},\pi,x'\rangle=dd^c{v_{\varepsilon}}_{\mid\{x'\}\times\cb^{n-p}}=dd^c\varphi_{\varepsilon}.
\end{equation}
Formula (\ref{eq3.2}) provides that
\begin{equation}\label{eq3.3}
\sum_{p+1\leq j,k\leq n}\frac{\partial^2\varphi_{\varepsilon}}{\partial z_j\partial\overline z_k}(z'')idz_j\wedge d\overline z_k=\sum_{p+1\leq j,k\leq n}\frac{\partial^2v_{\varepsilon}}{\partial z_j\partial\overline z_k}(x',z'')idz_j\wedge d\overline z_k
\end{equation}
and the  Hermitian matrices $B_{\varepsilon}(z'')$ and $A''_{\varepsilon}(x',z'')$ produced  by $\varphi_{\varepsilon}$ and ${v_{\varepsilon}}_{\mid\{x'\}\times\cb^{n-p}}$, respectively, are  given by $$B_{\varepsilon}(z''):=\left(\frac{\partial^2\varphi_{\varepsilon}}{\partial z_j\partial\overline z_k}(z'')\right)_{p+1\leq j,k\leq n}\quad\hbox{and}\quad A''_{\varepsilon}(x',z''):=\left(\frac{\partial^2v_{\varepsilon}}{\partial z_j\partial\overline z_k}(x',z'')\right)_{p+1\leq j,k\leq n}$$  and  are equal.\\
The G{\aa}ding inequality affirms that the projection of $\lambda_\varepsilon=(\lambda_{1,\varepsilon},\dots,\lambda_{n,\varepsilon})\in\Gamma_m$ to $\mathbb C^{n-p}$ 
satisfies $$\widetilde{\lambda}_\varepsilon=(\lambda_{j_1,\varepsilon},\dots,\lambda_{j_{n-p},\varepsilon})\in\Gamma_k,\quad \forall\;k\leq \frac{m(n-p)}{n}.$$ Therefore, we deduce that 
$$\binom{n-p}{s}{\alpha''_{\varepsilon}}^s\wedge{\beta''}^{n-p-s}=S_s(B_{\varepsilon}(z'')){\beta''}^{n-p}\geq0,\quad\forall\; 1\leq s\leq k$$
where 
$${\alpha''}_{\varepsilon}=\sum_{p+1\leq j,l\leq n}\frac{\partial^2v_{\varepsilon}}{\partial z_j\partial\overline z_l}idz_j\wedge d{\overline z}_l.$$
 It follows that, for every $q \geq q_{m,p}$, the restriction 
${v_\varepsilon}_{|\{x'\}\times\Omega''}$ is $(m-q)$-subharmonic on $\Omega''$. 
In particular, for $q = q_{m,p}$, the family 
$\big({v_\varepsilon}_{|\{x'\}\times\Omega''}\big)_\varepsilon$ is decreasing 
and consists of $(m-q_{m,p})$-subharmonic functions. Therefore, by the stability of $(m-q_{m,p})$-subharmonic functions under decreasing limits, 
there exists a unique $(m-q_{m,p})$-subharmonic function $\varphi$ on $\Omega''$ such that
$$\varphi = \lim_{\varepsilon \to 0} {v_\varepsilon}_{|\{x'\}\times\Omega''}.$$
By definition of slices, we may identify this limit with the restriction of $v$, namely
$$\langle v, \pi, x' \rangle = v_{|\{x'\}\times\Omega''}.$$
\textbf{Proof of statement (2).}\\
Since the family $\big({v_\varepsilon}_{|\{x'\}\times\Omega''}\big)_\varepsilon$ 
decreases to $v_{|\{x'\}\times\Omega''}$ and each 
${v_\varepsilon}_{|\{x'\}\times\Omega''}$ is $(m-q)$-subharmonic on $\Omega''$, 
it follows that $v_{|\{x'\}\times\Omega''}$ is also $(m-q)$-subharmonic on $\Omega''$. 

Therefore, by the continuity of the operator $dd^c$ under decreasing limits (see Formula~\eqref{eq3.2}), we obtain, in the sense of currents,
$$dd^c\langle v,\pi,x'\rangle
=\langle dd^c v,\pi,x'\rangle
= dd^c\big(v_{|\{x'\}\times\Omega''}\big).$$

\noindent \textbf{Proof of statement (3).}\\ Since pluripolarity is a local property, it suffices to prove that $E$ is locally pluripolar in $\Omega'$. Hence, we may assume that $E \subset \omega'$, where $\omega' \Subset \Omega'$ is a relatively compact open set. Let $\omega'' \Subset \Omega''$ be another relatively compact open set.\\
Since $E$ is a Borel subset of $\omega'$, we use the characterization of pluripolar sets in terms of pluricomplex capacity (see \cite{Be-Ta}). It is enough to show that
$$
Cap_p(E,\omega'):=\sup \left\{ \int_E (dd^c u)^p \;:\; u \in \mathrm{Psh}(\omega'),\; 0 \le u \le 1 \right\} = 0.
$$
Let $u \in \mathrm{Psh}(\omega')$ with $0 \leq u \leq 1$, and define $\tilde{u} := u \circ \pi$. Let $(v_j)_j$ be a decreasing sequence of smooth $m$-subharmonic functions converging pointwise to $v$. Without loss of generality, we assume that $v<0$ on $\omega=\omega'\times\omega''$. Then, by using the slicing formula of \cite{BM-El},
$$
\int_\omega v_j (dd^c \tilde{u})^p \wedge \beta''^{n-p} = \int_{x'\in\omega'} \langle v_j, \pi, x' \rangle (\mathbf{1}_{\omega''} \beta''^{n-p}) (dd^c u)^p = \int_{x'\in\omega'} f_j(x') (dd^c u)^p,
$$
where 
$$
f_j(x') := \int_{x''\in\omega''} v_j(x', x'') \beta''^{n-p}.
$$
By statement (1), the sequence $f_j \in \Sh_{\lfloor \frac{mp}{n}\rfloor}(\omega')$, decreases pointwise. Indeed, the function $f_j$ is an integral depending on a parameter $x'$ and the function $v_j(.,x'')$ is an $\lfloor \frac{mp}{n}\rfloor$-subharmonic for almost every $x''\in\Omega''$. Now, if $x' \in E$, then 
$$v_{|\{x'\} \times \omega''} \notin L^1_{\loc}(\omega'').$$
 So that  $f_j(x') \longrightarrow -\infty$ as $j\to+\infty$. Thus, for any $A > 0$, there exists $j_0$ such that for all $j \geq j_0$, $f_j(x') < -A$. 
It follows that 
\begin{align*}
    A\int_{E}(dd^cu)^p&\leq -\int_{E}f_j(x')(dd^cu)^p\\
    &\leq -\int_E \langle v_j, \pi, x' \rangle (\mathbf{1}_{\omega''} \beta''^{n-p}) (dd^c u)^p\\
    &\leq \int_\omega (-v_j) (dd^c \tilde{u})^p \wedge \beta''^{n-p}\\
    &\leq \int_\omega (-v) (dd^c \tilde{u})^p \wedge \beta''^{n-p}
\end{align*}
By taking a compact subset $K \Subset \Omega$ such that $\omega \Subset K$, the Chern--Levine--Nirenberg inequality yields a constant $c>0$, depending on $K$, such that 
$$\int_\omega (-v) (dd^c u)^p \wedge \beta''^{n-p}\leq c\|v\|_{L^1(K)}.$$
Thus, we deduce that 
$$\int_{E}(dd^cu)^p\leq \frac{c}{A}\|v\|_{L^1(K)}.$$
Letting $A\to+\infty$, we obtain 
$$\int_{E}(dd^cu)^p=0.$$
This shows that $Cap_p(E,\omega')=0$. i.e. $E$ is pluripolar in $\Omega'$.
\end{proof}

\section{ Slices and $m$-Lelong numbers of  $m$-subharmonic functions}
In this section, we investigate the relation between the slicing of $m$-subharmonic functions and the concept of $m$-Lelong numbers. Our method relies the construction of specially designed $m$-positive closed currents obtained by wedging the complex Hessian of the function with kernel potentials adapted to the slicing geometry. This construction yields sharp integral identities that describe how the local mass of $m$-subharmonic functions is distributed along complex slices, thus linking slicing techniques to the quantitative analysis of singularities. The constants appearing in these identities encode both the dimensional parameters $(n,m,p)$ and the radius of the slicing ball, reflecting the precise scaling behavior of the underlying kernels. 

\begin{example}
    For $\tau>1$ a positive real number, let $v_\tau$ be the function defined by $$v_\tau(z)=-\frac{1}{(\tau-1)|z|^{2(\tau-1)}}.$$ Then $v_\tau$ is $m$-subharmonic on $\mathbb C^n$ if and only if $\tau\leq \frac{n}{m}$. Indeed, it is easy to see that for every $1\leq k\leq m$,
    $$(dd^cv_\tau)^k\wedge\beta^{n-k}=\frac{1}{|z|^{2k\tau}}\left(\beta-\frac{k\tau}{|z|^2}d|z|^2\wedge d^c|z|^2\right)\wedge\beta^{n-1}.$$
    It is well known that the last quantity is positive for every $1\leq k\leq m$ if and only if  $m\tau\leq n$. 
    Moreover, using the result of \cite{Ben-Gh}, in such a case, we have 
    \begin{enumerate}
        \item $\nu_m(v_\tau,.)\equiv0$ on $\mathbb C^n$ for every $\tau<\frac{n}{m}$.
        \item $\nu_m(v_{\frac{n}{m}},.)\equiv0$ on $\mathbb C^n\smallsetminus\{0\}$ and $\nu_m(v_{\frac{n}{m}},0)>0$.
    \end{enumerate}
    This function is related to the fundamental solution. In fact, if $\tau=\frac{n}{m}$  then $v_{\frac{n}{m}}=\Phi_{n,m}$.\\
    By considering the restriction  
$$u_\tau := {v_\tau}_{|\{0\}\times\mathbb{C}^{n-p}}, \quad \text{i.e.} \quad 
u_\tau(z'') = v(0,z''), \quad \forall\, z'' \in \mathbb{C}^{n-p},$$
we observe that $u_\tau$ is $(m-q)$-subharmonic on $\mathbb{C}^{n-p}$ for every integer $q$ such that $$q \geq \frac{mp}{n}.$$
In particular, we have
$$\nu_{m-q}(u_\tau, \cdot) \equiv 0 \quad \text{on } \mathbb{C}^{n-p}$$
 for every $\tau \leq \frac{n-p}{m-q}$, unless $q = \frac{mp}{n}$ (whenever this quantity is an integer).
\end{example}
The $m$-Lelong number defined along complex $p$-planes exhibits strong rigidity properties. In particular, for fixed fibers, the function behaves uniformly outside a negligible exceptional set. The following result summarizes this framework, providing the key integral relation from which our subsequent results on
$m$-Lelong numbers follow.
\begin{proposition} \label{p4.2}
    Let $v$ be an $m$-subharmonic function on $\Omega'\times\Omega''$, $\mathcal B$ be a Borel relatively compact subset of $\Omega'$ and $x''\in\Omega''$. For every integer $q\geq \frac{mp}{n}$, Let $\nu_{m-q}(dd^cv,\mathcal B,x'',.)$ be the function defined by 
    $$\nu_{m-q}(dd^cv,\mathcal B,x'',r)=\frac{1}{r^{2(n-p)(1-\frac{1}{m-q})}}\int_{\mathcal B\times \mathbb B_{n-p}(x'',r)}dd^cv\wedge\beta^{n-1}.$$ Then $\nu_{m-q}(dd^cv,\mathcal B,x'',.)$ is a non-decreasing function on $]0,dist(x'',\partial \Omega'')[$.  The directional $(m-q)$-Lelong number of $dd^cv$ at $x''$ with respect to $\mathcal B$ is defined as $$\nu_{m-q}(dd^cv,\mathcal B,x'')=\lim_{r\to0}\nu_{m-q}(dd^cv,\mathcal B,x'',r).$$
\end{proposition}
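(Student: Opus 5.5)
The plan is to regularize first, then reduce the statement to the known monotonicity of $(m-q)$-Lelong functions of the slices, integrated over $\mathcal B$.

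\textbf{Regularization.} Replace $v$ by the decreasing regularizations $v_\varepsilon=v*\rho_\varepsilon$ used in the proof of Theorem~\ref{t3.3}. The measures $dd^cv_\varepsilon\wedge\beta^{n-1}$ converge weakly to $dd^cv\wedge\beta^{n-1}$. A pointwise limit of non-decreasing functions is non-decreasing. Hence it suffices to prove monotonicity for smooth $v$, at radii $r$ for which $\partial(\mathcal B\times\mathbb B_{n-p}(x'',r))$ carries no mass of the limit measure. These radii are all but countably many, and right-continuity of $r\mapsto\int_{\mathcal B\times\mathbb B_{n-p}(x'',r)}$ then gives the statement for every $r$.

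\textbf{Splitting $\beta^{n-1}$.} For smooth $v$, expand
\[
\beta^{n-1}=\binom{n-1}{p}\beta'^{p}\wedge\beta''^{\,n-p-1}+\binom{n-1}{p-1}\beta'^{p-1}\wedge\beta''^{\,n-p}.
\]
This splits $dd^cv\wedge\beta^{n-1}$ into a ``vertical'' part, proportional to $\Delta''v\,dV_n$, and a ``horizontal'' part, proportional to $\Delta'v\,dV_n$.

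\textbf{The vertical part.} Apply Fubini, or equivalently the slicing formula of \cite{BM-El} together with statement (2) of Theorem~\ref{t3.3}:
\[
\int_{\mathcal B\times\mathbb B_{n-p}(x'',r)}dd^cv\wedge\beta'^p\wedge\beta''^{n-p-1}=c\int_{x'\in\mathcal B}\Big(\int_{\mathbb B_{n-p}(x'',r)}dd^c\big(v_{|\{x'\}\times\Omega''}\big)\wedge\beta''^{n-p-1}\Big)dV_p(x').
\]
By Theorem~\ref{t3.3}, each slice $v_{|\{x'\}\times\Omega''}$ is $(m-q)$-subharmonic on $\Omega''\subset\mathbb C^{n-p}$ for every integer $q\ge \frac{mp}{n}$. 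The current $dd^c v_{|\{x'\}\times\Omega''}$ has bidimension $(n-p-1,n-p-1)$ in $\mathbb C^{n-p}$. The normalizing exponent of its $(m-q)$-Lelong function is therefore
\[
\tfrac{2(n-p)}{m-q}\bigl((m-q)+(n-p-1)-(n-p)\bigr)=2(n-p)\Bigl(1-\tfrac1{m-q}\Bigr),
\]
which is exactly the exponent in the statement. By the result of \cite{Ben-Gh} (or \cite{H-V}) on existence of $m$-Lelong numbers, the function $r\mapsto\nu_{m-q}\big(dd^cv_{|\{x'\}\times\Omega''},x'',r\big)$ is non-decreasing for each $x'$. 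Integrating these non-decreasing functions over $x'\in\mathcal B$ against $dV_p$ keeps monotonicity. So the vertical part, divided by $r^{2(n-p)(1-\frac1{m-q})}$, is non-decreasing.

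\textbf{The horizontal part (main obstacle).} The term $\int_{\mathcal B\times\mathbb B_{n-p}(x'',r)}\Delta'v$ is a positive-mass term, but it has no a priori monotone scaling in $r$. I would not treat it separately. Instead, I would run a Lelong--Jensen type argument directly for the $m$-positive current $T=dd^cv$ on $\mathcal B\times\Omega''$, with the weight $\psi(z)=|z''-x''|^2$, which depends only on $z''$. The comparison kernel is $\phi_{n-p,m-q}(\psi)$. Concretely:
\begin{enumerate}
\item Compute $\frac{d}{dr}$ of $r^{-2(n-p)(1-\frac1{m-q})}\int_{\{\psi<r^2\}}T\wedge\beta^{n-1}$ via Stokes in the $z''$-variables only; the $z'$-integration over $\mathcal B$ is a passive parameter and produces no boundary terms in $z''$.
\item Express the derivative as an integral of $T\wedge\beta'^{p-1}\wedge dd^c\phi_{n-p,m-q}(\psi)\wedge(\cdots)$ plus the vertical contribution already handled.
\item Prove positivity of that expression, i.e.\ that $dd^cv\wedge\beta'^{p-1}\wedge(dd^c\phi_{n-p,m-q}(\psi))^j\wedge\beta''^{\,\cdot}\ge0$. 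This should follow from the same G{\aa}rding-cone projection inequality used in the proof of Theorem~\ref{t3.3}, and this is exactly where $q\ge\frac{mp}{n}$ enters.
\end{enumerate}
I expect step 3, checking the sign of this mixed term via G{\aa}rding's inequality, to be the delicate point of the whole proof.
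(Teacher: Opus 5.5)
\textbf{Verdict: there is a genuine gap, in the horizontal part.} Your vertical part is fine (granting Theorem~\ref{t3.3}) and is equivalent to the paper's handling of $I(r)$. The paper runs a Lelong--Jensen formula with $\Phi_{n-p,m-q}$ in the $z''$-variables; you instead integrate the slicewise monotonicity of $(m-q)$-Lelong functions over $\mathcal B$. The problem is the horizontal part, and it already appears in your step~1, not only in step~3.

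\textbf{Step 1 fails.} Against $\beta'^{p-1}\wedge\beta''^{n-p}$ only the $z'z'$-block of $dd^cv$ survives, so the horizontal integrand is $c\,\Delta'v\,dV_n$ with $\Delta'v=\sum_{j\le p}\partial^2v/\partial z_j\partial\bar z_j$. For fixed $x'$ this is just a density in $z''$, with no $(m-q)$-positive structure in $z''$ behind it. The identity $\int_{\mathbb B_{n-p}(x'',r)}f\,\beta''^{n-p}=\int_{\partial\mathbb B_{n-p}(x'',r)}f\,d^c|z''|^2\wedge\beta''^{n-p-1}$, implicit in your steps~1--2, fails in general: it misses $\int df\wedge d^c|z''|^2\wedge\beta''^{n-p-1}$. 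Globally, the primitive $dd^cv\wedge\beta'^{p-1}\wedge d^c|z''|^2\wedge\beta''^{n-p-1}$ has a component of $z'$-degree $2p-1$ built from the mixed derivatives $\partial^2v/\partial z'_j\partial\bar z''_k$. That component produces a boundary term on $\partial\mathcal B\times\mathbb B_{n-p}(x'',r)$, so $\mathcal B$ is not a passive parameter.

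\textbf{Step 3 fails too.} The form there is $\Delta'v$ times a positive density, and $\Delta'v\ge0$ is guaranteed only when $m+p>n$. For $(n,m,p)=(10,5,2)$, the function $v=-\frac12(|z_1|^2+|z_2|^2)+\sum_{j\ge3}|z_j|^2$ is $5$-subharmonic (its eigenvalue vector lies in $\Gamma_5$, with $S_5=0$), yet $\Delta'v=-1$.

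\textbf{The gap cannot be closed.} When $m-q\ge2$, so that $\gamma:=2(n-p)(1-\frac1{m-q})>0$, the claimed monotonicity is false, even for plurisubharmonic $v$. Take a smooth $\psi\ge0$, $\psi\not\equiv0$, supported in $\{a<|z''|<b\}$ with $b<1$. Put $g=\psi^2$, $G=\int g\,dV_{n-p}$, and $v(z)=|z'|^2g(z'')+K|z''|^2$ on $\mathbb B_p(\delta)\times\mathbb B_{n-p}(1)$. The mixed Hessian entries are $\bar z'_j\,\partial g/\partial\bar z''_k=2\bar z'_j\psi\,\partial\psi/\partial\bar z''_k$; bounding them by Cauchy--Schwarz against $g|\xi'|^2$ shows that $v$ is plurisubharmonic, hence $m$-subharmonic, once $K\ge C_\psi\delta^2$. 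Now take $\mathcal B=\mathbb B_p(\delta/2)$, $x''=0$ and $b\le r<1$. The term $|z'|^2\Delta''g$ integrates to $0$, and for a dimensional constant $c>0$ one gets $\nu_{m-q}(dd^cv,\mathcal B,0,r)=c\,|\mathcal B|\bigl(pG\,r^{-\gamma}+(n-p)\,\omega_{n-p}K\,r^{2(n-p)-\gamma}\bigr)$, where $\omega_{n-p}$ is the volume of the unit ball of $\mathbb C^{n-p}$. Taking $K=C_\psi\delta^2$ with $\delta$ small makes the derivative at $r=b$ negative. The vertical part is monotone here; it is the horizontal part, proportional to $r^{-\gamma}$, that decreases.

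\textbf{The paper's proof has the same defect.} It treats $J(r)$ exactly as you propose, asserting $J(r_2)-J(r_1)=\int_{\mathcal B\times\mathbb B_{n-p}(r_1,r_2)}dd^cv\wedge\beta'^{p-1}\wedge(dd^c\Phi)^{m-q-1}\wedge\beta''^{n-p-m+q+1}\ge0$. In the example above the right side vanishes for $b\le r_1<r_2$ while $J$ strictly decreases, so the paper does not supply the missing idea. What survives is monotonicity of $I(r)$ alone, and of the whole quantity when $m-q=1$.

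\textbf{A caveat on the vertical step.} Your vertical step, like the paper's, inherits Theorem~\ref{t3.3}, whose G{\aa}rding claim is only safe for $q\ge p$. Permute the coordinates in the quadratic example so that $z'=(z_1,z_2)$ carries two of the eigenvalues $1$. The slices then have eigenvalues $(-\frac12,-\frac12,1,\dots,1)\in\mathbb R^8$ with $S_4=-\frac54<0$, so they are not $4$-subharmonic, although $q_{5,2}=1$.
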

\begin{proof} Without loss of generality, we can assume that $x''=0$. Let 
$$q\geq q_{m,p}:=\inf\left\{s\in\mathbb{N},\ s\geq \frac{mp}{n}\right\}=\left\{\begin{array}{lcl}
    \ds\frac{mp}{n} & \text{if}& \ds\frac{mp}{n}\in\mathbb N \\
    \left\lfloor\frac{mp}{n}\right\rfloor+1 & \text{if}&\ds \frac{mp}{n}\not\in\mathbb N 
\end{array}\right.$$ where $\lfloor.\rfloor$ is the integer part.\\
Using the binomial formula, we obtain
    \begin{align*}
        \nu_{m-q}(dd^cv,\mathcal B,0,r)&=\frac{1}{r^{2(n-p)(1-\frac{1}{m-q})}}\int_{\mathcal B\times \mathbb B_{n-p}(r)}dd^cv\wedge\beta^{n-1}\\
        &=\binom{n-1}{p} I(r)+\binom{n-1}{p-1} J(r)
    \end{align*} where
    $$I(r) =\frac{1}{r^{2(n-p)(1-\frac{1}{m-q})}}\int_{\mathcal B\times \mathbb B_{n-p}(r)}dd^cv\wedge\beta'^p\wedge\beta''^{n-p-1}$$ and 
    $$J(r)=\frac{1}{r^{2(n-p)(1-\frac{1}{m-q})}}\int_{\mathcal B\times \mathbb B_{n-p}(r)}dd^cv\wedge\beta'^{p-1}\wedge\beta''^{n-p}.$$
     Using the following $(m-q)$-subharmonic function on $\mathbb C^{n-p}$
$$\Phi(z''):=\Phi_{n-p,m-q}(z'')=-\frac{1}{\left(\frac{n-p}{m-q}-1\right)|z''|^{2\left(\frac{n-p}{m-q}-1\right)}},$$
and the  Stokes formula, we obtain
    \begin{align*}
        I(r)&=\frac{1}{r^{2(n-p)(1-\frac{1}{m-q})}}\int_{\mathcal B\times \mathbb B_{n-p}(r)}dd^cv\wedge\beta'^p\wedge\beta''^{n-p-1}\\
        &=\frac{1}{r^{2(n-p)(1-\frac{1}{m-q})}}\int_{\mathcal B\times \partial\mathbb B_{n-p}(r)}dd^cv\wedge\beta'^p\wedge d^c|z''|^2\wedge\beta''^{n-p-2}\\
        &=\int_{\mathcal B\times \partial\mathbb B_{n-p}(r)}dd^cv\wedge\beta'^p\wedge(dd^c\Phi)^{m-q-1}\wedge d^c|z''|^2\wedge\beta''^{n-p-m+q-1}.\\
    \end{align*}
    Hence for $0<r_1<r_2<\mathrm{dist}(0,\partial\Omega''),$ if we set $\mathbb B_{n-p}(r_1,r_2)=\mathbb B_{n-p}(r_2)\smallsetminus \mathbb B_{n-p}(r_1)$, we obtain a Lelong-Jensen type formula:
    \begin{equation}\label{eq4.1}
        \begin{array}{lcl}
      I(r_2)-I(r_1)&=&\ds \int_{\mathcal B\times \partial\mathbb B_{n-p}(r_2)}dd^cv\wedge\beta'^p\wedge(dd^c\Phi)^{m-q-1}\wedge d^c|z''|^2\wedge\beta''^{n-p-m+q-1}\\
       &&\ds-\int_{\mathcal B\times \partial\mathbb B_{n-p}(r_1)}dd^cv\wedge\beta'^p\wedge(dd^c\Phi)^{m-q-1}\wedge d^c|z''|^2\wedge\beta''^{n-p-m+q-1}\\
       &=&\ds \int_{\mathcal B\times \mathbb B_{n-p}(r_1,r_2)}dd^cv\wedge\beta'^p\wedge(dd^c\Phi)^{m-q-1}\wedge\beta''^{n-p-m+q}
    \end{array}
    \end{equation}
    With the same argument, 
    $$J(r)=\int_{\mathcal B\times \partial\mathbb B_{n-p}(r)}dd^cv\wedge\beta'^{p-1}\wedge(dd^c\Phi)^{m-q-1}\wedge d^c|z''|^2\wedge\beta''^{n-p-m+q}.$$
    Thus, 
    \begin{equation}\label{eq4.2}
        J(r_2)-J(r_1)=\ds \int_{\mathcal B\times \mathbb B_{n-p}(r_1,r_2)}dd^cv\wedge\beta'^{p-1}\wedge(dd^c\Phi)^{m-q-1}\wedge\beta''^{n-p-m+q+1}.
    \end{equation}
 The equalities \eqref{eq4.1} and \eqref{eq4.2} show that both functions $r\longmapsto I(r)$ and $r\longmapsto J(r)$ increase.
\end{proof}
\begin{corollary}\label{c4.3}
    Let $v$ be an $m$-subharmonic function on $\Omega'\times\Omega''$, $\mathcal B$ be a Borel relatively compact subset of $\Omega'$ and $x''\in\Omega''$. For every $q\geq q_{m,p}$, the directional $(m-q)$-Lelong number $\nu_{m-q}(dd^cv,\mathcal B,x'')$ and the $(m-q)$-Lelong number of the slice $\langle dd^cv,\pi,x'\rangle$ are related by the following identity:
    \begin{align*}
      \nu_{m-q}(dd^cv,\mathcal B,x'')&=\binom{n-1}{p}\int_{\mathcal B}\nu_{m-q}(\langle dd^cv,\pi,x'\rangle,(x',x''))\beta'^p(x')\\
      &=\binom{n-1}{p}\int_{\mathcal B}\nu_{m-q}(dd^c\langle v,\pi,x'\rangle,(x',x''))\beta'^p(x').  
    \end{align*}
\end{corollary}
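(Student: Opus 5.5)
We plan to start from the splitting $\nu_{m-q}(dd^cv,\mathcal B,x'',r)=\binom{n-1}{p}I(r)+\binom{n-1}{p-1}J(r)$ obtained in the proof of Proposition~\ref{p4.2}, and to treat the two terms separately. The term $I(r)$ carries the factor $\beta'^p$, which is exactly the form entering the slicing formula of \cite{BM-El}. Applying that formula (first to the regularizations $v_\varepsilon$ and then passing to the limit), we will rewrite
$$I(r)=\int_{x'\in\mathcal B}\frac{1}{r^{2(n-p)(1-\frac{1}{m-q})}}\int_{\mathbb B_{n-p}(x'',r)}\langle dd^cv,\pi,x'\rangle\wedge\beta''^{\,n-p-1}\,\beta'^p(x').$$
The inner quantity is precisely the $(m-q)$-Lelong function of the bidimension $(n-p-1,n-p-1)$ current $\langle dd^cv,\pi,x'\rangle$ on $\Omega''$, because the normalizing exponent $\frac{2(n-p)}{m-q}(m-q+(n-p-1)-(n-p))$ equals $2(n-p)(1-\frac1{m-q})$. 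By Theorem~\ref{t3.3}, this current equals $dd^c v_{|\{x'\}\times\Omega''}$ for $x'\notin E$, and $v_{|\{x'\}\times\Omega''}$ is $(m-q)$-subharmonic. Hence its $(m-q)$-Lelong function is non-decreasing in $r$ and converges to $\nu_{m-q}(\langle dd^cv,\pi,x'\rangle,(x',x''))$. Since $E$ is pluripolar, it is $\beta'^p$-negligible, so the exceptional set does not affect the integral. Monotone convergence as $r\downarrow0$, where the integrands decrease and are nonnegative, then gives
$$\lim_{r\to0}I(r)=\int_{\mathcal B}\nu_{m-q}(\langle dd^cv,\pi,x'\rangle,(x',x''))\,\beta'^p(x').$$
The second equality in the statement follows directly from Theorem~\ref{t3.3}(2).

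Next we will show that $J(r)\to0$. The integrand $dd^cv\wedge\beta'^{p-1}\wedge\beta''^{n-p}$ is a positive measure, and the region $\mathcal B\times\mathbb B_{n-p}(x'',r)$ shrinks to $\mathcal B\times\{x''\}$. The difficulty is that the normalizing factor $r^{-2(n-p)(1-\frac1{m-q})}$ blows up. We plan to use \eqref{eq4.2}, which shows that $J$ is non-decreasing, and to control $J(r)$ through the kernel representation
$$J(r)=\int_{\mathcal B\times\mathbb B(r)}dd^cv\wedge\beta'^{p-1}\wedge(dd^c\Phi)^{m-q-1}\wedge\beta''^{n-p-m+q+1}+\text{boundary terms}.$$
Here the mixed measure $dd^cv\wedge\beta'^{p-1}\wedge\beta''^{n-p}$ has density $\sum_{j\le p}\partial^2v/\partial z_j\partial\bar z_j$ in the primed directions. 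Integrating first in $x'$ and using the local integrability of $v$ (via $\int_{\mathcal B}\Delta' v\,\beta'^p$, controlled by Stokes with a cutoff in $x'$), we aim for an estimate of the form $J(r)\le C\,r^{2(n-p)/(m-q)}\sup_{\rho\le r}\mathcal M(|v|,\cdot)$. To make this vanish, it should suffice to compare against the decay of $\int_{\mathbb B(r)}|v|$ in the slice directions, which for $(m-q)$-subharmonic slices is $O(r^{2(n-p)}|\phi_{n-p,m-q}(r^2)|)=O(r^{2(n-p)(1-\frac1{m-q})+2})$.

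The main obstacle will be this vanishing of the $J$ contribution, which involves mixing a derivative in $x'$ against a mass estimate in $x''$, together with justifying the interchange of the limit $r\to0$ with the slicing integral, since both are needed to conclude that $\lim_{r\to0}\nu_{m-q}(dd^cv,\mathcal B,x'',r)=\binom{n-1}{p}\lim_{r\to0}I(r)$. For the interchange, we will first try the regularization argument: work with $v_\varepsilon$, where slices are genuine restrictions, and pass to the limit $\varepsilon\to0$ using the weak convergence $dd^cv_\varepsilon\to dd^cv$ on sets of the form $\mathcal B\times\mathbb B(r)$ with negligible boundary mass. If the direct bound on $J$ fails, we will instead try to show that $J(r)$ is dominated by a constant times $r^2$ times a non-decreasing Lelong-type quantity, again using \eqref{eq4.2}.
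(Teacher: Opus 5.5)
Your treatment of $I(r)$ follows the paper: the slicing formula, the check that $2(n-p)(1-\frac{1}{m-q})$ is the right normalizing exponent for the bidimension $(n-p-1,n-p-1)$ slice, monotonicity of the slice Lelong functions for $x'\notin E$, $\beta'^p$-negligibility of the pluripolar set $E$, and monotone convergence. The second equality via Theorem~\ref{t3.3}(2) is also as in the paper, and you justify the interchange of $\lim_{r\to0}$ and $\int_{\mathcal B}$ more carefully than the paper does.

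The gap is the step you flag yourself: $\lim_{r\to0}J(r)=0$ is never proved. You only list estimates you ``aim for'', and \eqref{eq4.2} can at most give existence of $\lim J$, not its value. The mechanism you sketch also fails as stated, for two reasons.

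\textbf{Positivity.} Since $\mathcal B$ is only Borel, Stokes in $x'$ can enter only after dominating $\mathbf 1_{\mathcal B}$ by a cutoff $\chi(x')$. Both that domination and the passage from an upper bound on $J$ to $J\to0$ need $dd^cv\wedge\beta'^{p-1}\wedge\beta''^{n-p}\ge0$. This measure is a positive multiple of the partial trace $\sum_{j\le p}\partial^2v/\partial z_j\partial\bar z_j$. That partial trace is nonnegative for all $m$-subharmonic $v$ only when $p\ge n-m+1$. For instance, with $p=1$ the paper's function $v_{a,1}$ is $m$-subharmonic for $-\frac{n-m}{m}\le a<0$, while $\partial^2 v_{a,1}/\partial z_1\partial\bar z_1=a<0$.

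\textbf{Uniformity in $x'$.} The slice-wise bound $\int_{\mathbb B_{n-p}(x'',r)}|v(x',\cdot)|\,dV_{n-p}=O\bigl(r^{2(n-p)}|\phi_{n-p,m-q}(r^2)|\bigr)$ holds with a constant depending on $x'$, through the slice's mean values and Lelong number. You give no argument that these constants are integrable in $x'$. That integrability is exactly what a bound on $\int_{K\times\mathbb B_{n-p}(x'',r)}|v|$ requires.

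The paper handles this step with a bounded plurisubharmonic weight in $z''$ whose Hessian is large near $x''$. The function $g(z'')=(-\log|z''-x''|^2)^{-\varepsilon}$ is continuous and plurisubharmonic on $\mathbb B_{n-p}(x'',1)$. Its Hessian is bounded below by $|z''-x''|^{-2}$ times a power of $(-\log|z''-x''|^2)^{-1}$.

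Because $g$ is bounded, $\int_{\mathcal B\times\mathbb B_{n-p}(x'',r_0)}dd^cv\wedge\beta'^{p-1}\wedge(dd^cg)^{n-p}$ is finite. Comparing $(dd^cg)^{n-p}$ with $\beta''^{n-p}$ on $\mathbb B_{n-p}(x'',r)$ then gives $\int_{\mathcal B\times\mathbb B_{n-p}(x'',r)}dd^cv\wedge\beta'^{p-1}\wedge\beta''^{n-p}\le C r^{2(n-p)}$ times a power of $-\log r$. Dividing by $r^{2(n-p)(1-\frac{1}{m-q})}$ leaves $r^{2(n-p)/(m-q)}$ times logarithms, so $J(r)\to0$.

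This comparison also tacitly uses positivity of the same mixed measure. So whichever route you take, the sign of $dd^cv\wedge\beta'^{p-1}\wedge\beta''^{n-p}$ has to be dealt with separately when $p\le n-m$.
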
 
\begin{proof}
    Thanks to Proposition \ref{p4.2}, the directional Lelong number of $dd^cv$ is given by 
    $$\nu_{m-q}(dd^cv,\mathcal B,x'')=\binom{n-1}{p}\lim_{r\to0}I(r)+\binom{n-1}{p-1}\lim_{r\to0}J(r).$$
    Hence, to prove the result, it suffices to show that the second limit is equal to zero. To this aim, let $\varepsilon>0$ and $g_\varepsilon$ be the function defined on $\mathbb B_{n-p}(x'',1)$ by 
    $$g(z'')=\frac{1}{(-\log(|z''-x''|^2))^\varepsilon}.$$
    Then $g$ is a continuous plurisubharmonic 
    function on $\mathbb B_{n-p}(x'',1)$ and we have 
    $$dd^cg(z'')\geq \frac{\varepsilon(n-p-1)}{n-p}\frac{\beta''}{|z''-x''|^2(-\log(|z''-x''|^2))^{1+\varepsilon}}.$$
    Hence, for every $0<r<r_0<\min(1,dist(x'',\partial \Omega''))$, 
    \begin{align*}
        c&:=\int_{\mathcal B\times\mathbb B_{n-p}(x'',r_0)}dd^cv\wedge\beta'^{p-1}\wedge dd^cg\\
        &\geq\frac{\varepsilon(n-p-1)}{n-p}\int_{\mathcal B\times\mathbb B_{n-p}(x'',r)} \frac{dd^cv\wedge\beta'^{p-1}\wedge\beta''^{n-p}}{\left(|z''-x''|^2(-\log(|z''-x''|^2))^{1+\varepsilon}\right)^{n-p}}\\
        &\geq \frac{\varepsilon(n-p-1)}{n-p}\frac{1}{\left(r^2(-\log(r^2))^{1+\varepsilon}\right)^{n-p}}\int_{\mathcal B\times\mathbb B_{n-p}(x'',r)} dd^cv\wedge\beta'^{p-1}\wedge\beta''^{n-p}
    \end{align*}
Since $g$ is continuous, we deduce that the value $c$ of the above integral is finite and for every $0<r<r_0<\min(1,dist(x'',\partial \Omega''))$,
$$\int_{\mathcal B\times\mathbb B_{n-p}(x'',r)} dd^cv\wedge\beta'^{p-1}\wedge\beta''^{n-p}\leq  \frac{(n-p)c}{\varepsilon(n-p-1)}r^{2(n-p)}(-\log(r^2))^{(n-p)(1+\varepsilon)}.$$
    It follows that 
        \begin{align*}
            J(r)&=\frac{1}{r^{2(n-p)(1-\frac{1}{m-q})}}\int_{\mathcal B\times \mathbb B_{n-p}(x'',r)}dd^cv\wedge\beta'^{p-1}\wedge\beta''^{n-p}\\
            &\leq \frac{(n-p)c}{\varepsilon(n-p-1)}r^{\frac{2(n-p)}{m-q}}(-\log(r^2))^{(n-p)(1+\varepsilon)}.
        \end{align*} 
        This shows that $J(r)$ goes to zero when $r\to0$. As a consequence,  we obtain 
    \begin{align*}
        \lim_{r\to0}I(r)&=\int_{x'\in\mathcal B}\left(\lim_{r\to0}\frac{1}{r^{2(n-p)(1-\frac{1}{m-q})}}\int_{
    \{x'\}\times \mathbb B_{n-p}(x'',r)}\langle dd^cv,\pi,x'\rangle\wedge\beta''^{n-p-1}\right)\beta'^p\\
    &=\int_{\mathcal B}\nu_{m-q}(\langle dd^cv,\pi,x'\rangle,(x',x''))\beta'^p(x').
    \end{align*}
\end{proof}
We conclude this section by proving the following result that, up to a constant,   the Lelong number of the sliced current $\langle dd^cv,\pi,x'\rangle$ at a point coincides with the classical
$m$-Lelong number of $dd^cv.$ This equality, which holds almost everywhere, confirms the consistency of the slicing approach and reinforces the geometric meaning of the
$m$-Lelong number.
\begin{theorem}\label{t4.4}
    Let $v$ be an $m$-subharmonic function on $\Omega'\times\Omega''$ and $x''\in\Omega''$ be a fixed point. 
    \begin{enumerate}
        \item If $q_{m,p}=\frac{mp}{n}$ then $$\nu_m(dd^cv,(x',x''))=\pi^p\frac{\Gamma\left(n-\frac{n}{m}+1\right)}{\Gamma\left(n-\frac{n}{m}+p+1\right)}\binom{n-1}{p}\nu_{m-q_{m,p}}(\langle dd^cv,\pi,x'\rangle,(x',x''))$$ for almost every $x'\in\Omega'$.
        \item If $q_{m,p}>\frac{mp}{n}$, then $\nu_{m-q_{m,p}}(\langle dd^cv,\pi,x'\rangle,(x',x''))=0$ for almost every $x'\in\Omega'$.
    \end{enumerate}
\end{theorem}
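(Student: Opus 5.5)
The plan is to combine Corollary~\ref{c4.3} with a differentiation argument in $x'$ and a comparison of radial scalings. Fix $x''$ and apply Corollary~\ref{c4.3} with $q=q_{m,p}$ and $\mathcal B=\mathbb B_p(x',\rho)$. This gives
\begin{equation*}
\nu_{m-q}(dd^cv,\mathbb B_p(x',\rho),x'')=\binom{n-1}{p}\int_{\mathbb B_p(x',\rho)}\nu_{m-q}(\langle dd^cv,\pi,y'\rangle,(y',x''))\,\beta'^p(y').
\end{equation*}
By Theorem~\ref{t3.3}, the integrand is defined outside the pluripolar set $E$, hence almost everywhere. Proposition~\ref{p4.2} shows it is a decreasing limit of integrable functions of $y'$, so it is measurable and locally integrable. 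Dividing by the volume $\int_{\mathbb B_p(x',\rho)}\beta'^p=c_p\rho^{2p}$ and letting $\rho\to0$, the Lebesgue differentiation theorem identifies $\binom{n-1}{p}\nu_{m-q}(\langle dd^cv,\pi,x'\rangle,(x',x''))$ with $\lim_{\rho\to0}(c_p\rho^{2p})^{-1}\nu_{m-q}(dd^cv,\mathbb B_p(x',\rho),x'')$ for almost every $x'$.

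The second step compares this limit with $\nu_m(dd^cv,(x',x''))$. Take $\rho=r$, so the polydisc $\mathbb B_p(x',r)\times\mathbb B_{n-p}(x'',r)$ has comparable size to the ball $\mathbb B_n((x',x''),r)$. The exponents then match exactly:
\begin{equation*}
2(n-p)\Bigl(1-\frac{1}{m-q}\Bigr)+2p=2n\Bigl(1-\frac1m\Bigr)\iff \frac{n-p}{m-q}=\frac nm\iff q=\frac{mp}{n}.
\end{equation*}
\textbf{Case (1).} Here $q=mp/n$, so the double quotient is the $m$-Lelong quotient of $dd^cv$ taken over a polydisc rather than a ball. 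I would remove the inner ball error using the estimate $J(r)\to0$ from Corollary~\ref{c4.3}, and replace the polydisc by a ball through the mean-value characterization of \cite{Ben-Gh} together with monotonicity. The constant $\pi^p\,\Gamma(n-\frac nm+1)/\Gamma(n-\frac nm+p+1)$ should then come out of integrating the fibrewise kernel $\Phi_{n-p,m-q}$ against $\beta'^p$ in the $x'$-variables, which is a Beta-type integral $\int_0^1 t^{p-1}(1-t)^{n-\frac nm}\,dt$ over the ball. One fact makes this consistent: for $v=\Phi_{n,m}$ centred at $(x',x'')$, the slice at $x'$ is exactly $\Phi_{n-p,m-q}$ up to scaling. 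So I would compute the constant on this model and then transfer it to general $v$ by the linearity of both sides in $dd^cv$ and monotone comparison.

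\textbf{Case (2).} Here $q>mp/n$, so $\frac{n-p}{m-q}>\frac nm$. The fibrewise normalization $r^{-2(n-p)(1-\frac1{m-q})}$ is then weaker than $r^{-2n(1-\frac1m)+2p}$ by a factor $r^{2\delta}$ with $\delta>0$. Since the $m$-Lelong quotient of $dd^cv$ stays bounded as $r\to0$ by \cite{Ben-Gh}, the averaged quantity is $O(r^{2\delta})\to0$. The differentiation step then forces the slice Lelong number to vanish for almost every $x'$, using nonnegativity of the integrand.

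The main obstacle is making the Lebesgue differentiation argument legitimate. The limits in $r$ (defining the Lelong numbers) and in $\rho$ (differentiation) must be taken simultaneously with $\rho=r$, which is not directly covered by Lebesgue's theorem. I would handle this through monotonicity in $r$ from Proposition~\ref{p4.2}: for fixed $\rho$, bound the $r$-quotient from above and below, and then use dominated convergence on the nonnegative, monotone integrands. Obtaining the exact constant in case (1), rather than only inequalities, is the other delicate point.
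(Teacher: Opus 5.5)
\textbf{Step 1 and case (2).} Your Step~1 is sound. Case~(2) also goes through in your framework, once you notice that the order-of-limits issue resolves itself there: the monotonicity in Proposition~\ref{p4.2} gives exactly the direction you need. With $q=q_{m,p}$, $x=(x',x'')$ and $\mathbb B_p(x',\rho)\times\mathbb B_{n-p}(x'',\rho)\subset\mathbb B_n(x,\sqrt2\rho)$,
\[
\frac{\nu_{m-q}(dd^cv,\mathbb B_p(x',\rho),x'')}{c_p\rho^{2p}}\le\frac{\nu_{m-q}(dd^cv,\mathbb B_p(x',\rho),x'',\rho)}{c_p\rho^{2p}}\le C\rho^{2\delta}\,\nu_m(dd^cv,x,\sqrt2\rho),\qquad \delta=\frac{n-p}{m-q}-\frac nm .
\]
The left side tends to $\binom{n-1}{p}\nu_{m-q}(\langle dd^cv,\pi,x'\rangle,x)$ for a.e.\ $x'$, and the right side tends to $0$. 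This is a pointwise version of the paper's integrated estimate.

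\textbf{The gap is case (1).} With $\delta=0$ the same chain gives only the one-sided, non-sharp bound $\binom{n-1}{p}\nu_{m-q}(\langle dd^cv,\pi,x'\rangle,x)\le C\,\nu_m(dd^cv,x)$ a.e.

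The reverse bound would require controlling the joint polydisc quotient by the iterated one. Monotonicity does not give this, and it fails at individual points: at the centre of a translate of $\Phi_{n,m}$ the iterated quotient is $0$ while $\nu_m>0$.

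Comparing the polydisc with $\mathbb B_n(x,r)$ and $\mathbb B_n(x,\sqrt2 r)$ can never produce an exact constant either. The ratio of the masses depends on how $dd^cv\wedge\beta^{n-1}$ is distributed near $x$.

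Calibrating on $\Phi_{n,m}$ does not fill this. Its singular set is a single point, so both sides vanish for every other $x'$, and the model never tests an a.e.\ identity. Nothing like ``linearity plus monotone comparison'' transfers a constant from translates of $\Phi_{n,m}$ to a general $v$.

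\textbf{The missing idea: average over $x'$ before letting $r\to0$.} This is what the paper does.
\begin{itemize}
\item[(i)] Integrate $\nu_m(dd^cv,(x',x''),r)$ over $x'\in\mathbb B_p(a,r_0)$ and apply Fubini. The $x'$-volume of $\{x':|z-(x',x'')|<r\}$ is $\frac{\pi^p}{p!}(r^2-|z''-x''|^2)^p$, which depends on $z''$ only.
\item[(ii)] The average of the ball quotients therefore becomes $\frac{\pi^p}{p!}r^{-2n(1-\frac1m)}\int_0^r(r^2-t^2)^p\,d\sigma(t)$, where $\sigma(t)=t^{2(n-p)(1-\frac1{m-q})}\nu_{m-q}(dd^cv,\mathcal B,x'',t)$. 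This holds up to replacing $\mathcal B$ by $\mathbb B_p(a,r_0\pm\varepsilon)$.
\item[(iii)] After an integration by parts, Proposition~\ref{p4.2} squeezes $\nu_{m-q}(dd^cv,\mathcal B,x'',t)$, for $t\le r$, between its limit and its value at $r$. The remaining radial integral is explicit.
\item[(iv)] Letting $r\to0$, with monotone convergence on the left and Corollary~\ref{c4.3} on the right, gives equality of the two integrals over every ball, hence a.e.\ equality.
\end{itemize}
Differentiation is applied only to this integrated identity, so your simultaneous-limit problem never arises. The constant comes from the averaging kernel, not from any model.

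\textbf{A caution on the constant.} The radial integral that actually appears is $\int_0^r(r^2-t^2)^{p-1}t^{1+2(n-p)-\frac{2n}{m}}\,dt$. With the prefactor $2\pi^p/(p-1)!$ it equals $\pi^p\,\Gamma(n-p-\frac nm+1)/\Gamma(n-\frac nm+1)$ times $r^{2n(1-\frac1m)}$. This is not the ratio displayed in the statement: for $(n,m,p)=(6,3,2)$ the two are $\pi^2/12$ and $\pi^2/30$. Your $\int_0^1t^{p-1}(1-t)^{n-\frac nm}\,dt$ was fitted to the displayed constant rather than derived, so it is not a consistency check.
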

\begin{proof}
    Let us start by proving the first assertion, where we assume that $\frac{mp}{n}$ is an integer. In this case, we have $q_{m,p}=\frac{mp}{n}$ and the slice $\langle v,\pi,x'\rangle$ is $(m-q_{m,p})$-subharmonic on $\Omega''$ for every $x'\in\Omega'\smallsetminus E$ ($E$ is an exceptional set). Hence, to prove the result, it suffices to show that 
    $$\int_{\mathcal B}\nu_m(dd^cv,(x',x''))\beta'^p(x')=\int_{\mathcal B}\nu_{m-q}(\langle dd^cv,\pi,x'\rangle,(x',x''))\beta'^p(x')$$
    for every Borel subset $\mathcal{B}$ of $\Omega'$. To this aim, one can assume that $x''=0$ and take $\mathcal{B}=\mathbb B_p(a,r_0)$.
    Using the definition of the Lelong number and the Fubini theorem, we see that for $r$ small enough,
    $$\begin{array}{l}
        \ds\int_{\mathbb B_p(a,r_0)}\nu_m(dd^cv,(x',0),r)\beta'^p(x')\\
        \ds=\frac{1}{r^{2n\left(1-\frac{1}{m}\right)}}\int_{\mathbb B_p(a,r_0)}\left(\int_{\mathbb B_n((x',0),r)}dd^cv\wedge\beta^{n-1}(z)\right)\beta'^p(x')\\
        =\ds\frac{1}{r^{2n\left(1-\frac{1}{m}\right)}}\int_{z\in A(r)\times \mathbb B_{n-p}(r)}\left(\int_{\mathbb B_p(z,\sqrt{r^2-|z''|^2})}\beta'^p(x')\right)dd^cv\wedge\beta^{n-1}(z)\\
        =\ds\frac{\pi^p}{p!r^{2n\left(1-\frac{1}{m}\right)}}\int_{A(r)\times \mathbb B_{n-p}(r)}(r^2-|z''|^2)^pdd^cv\wedge\beta^{n-1}(z)\\
    \end{array}$$
    where  $A(r)$ is a subset of $\mathbb C^p$ that satisfies $$\mathbb B_p(a,r_0-r)\times \mathbb B_{n-p}(r)\subset A(r)\times \mathbb B_{n-p}(r)\subset\mathbb B_p(a,r_0+r)\times \mathbb B_{n-p}(r).$$
    Now, using Proposition \ref{p4.2}, if we consider the positive measure $$\sigma(dd^cv,\mathbb B_p(a,r_1),r):=\int_{\mathbb B_p(a,r_1)\times \mathbb B_{n-p}(r)}dd^cv\wedge\beta^{n-1},$$
    we obtain for $0<r<\varepsilon<r_0$, 
    $$\begin{array}{l}
        \ds\int_{\mathbb B_p(a,r_0)}\nu_m(dd^cv,(x',0),r)\beta'^p(x')\\
        \leq\ds\frac{\pi^p}{p!r^{2n\left(1-\frac{1}{m}\right)}}\int_{\mathbb B_p(a,r_0+\varepsilon)\times \mathbb B_{n-p}(r)}(r^2-|z''|^2)^pdd^cv\wedge\beta^{n-1}(z)\\
        \leq \ds\frac{\pi^p}{p!r^{2n\left(1-\frac{1}{m}\right)}}\int_0^r(r^2-t^2)^pd\sigma(dd^cv,\mathbb B_p(a,r_0+\varepsilon),t)\\
        \leq \ds\frac{2\pi^p}{(p-1)!r^{2n\left(1-\frac{1}{m}\right)}}\int_0^r(r^2-t^2)^{p-1}t\sigma(dd^cv,\mathbb B_p(a,r_0+\varepsilon),t)dt\\
        \leq \ds\frac{2\pi^p}{(p-1)!r^{2n\left(1-\frac{1}{m}\right)}}\int_0^r(r^2-t^2)^{p-1}t^{1+2(n-p)\left(1-\frac{1}{m-q_{m,p}}\right)}\nu_{m-q_{m,p}}(dd^cv,\mathbb B_p(a,r_0+\varepsilon),t)dt\\
        \leq \ds\frac{2\pi^p}{(p-1)!r^{2n\left(1-\frac{1}{m}\right)}}\nu_{m-q_{m,p}}(dd^cv,\mathbb B_p(a,r_0+\varepsilon),r)\int_0^r(r^2-t^2)^{p-1}t^{1+2(n-p)\left(1-\frac{1}{m-q_{m,p}}\right)}dt\\
        \leq \ds \pi^p\frac{\Gamma\left(n-\frac{n-p}{m-q_{m,p}}+1\right)}{\Gamma\left(n-\frac{n-p}{m-q_{m,p}}+p+1\right)}\frac{r^{2\left(n-\frac{n-p}{m-q_{m,p}}\right)}}{r^{2n\left(1-\frac{1}{m}\right)}}\nu_{m-q_{m,p}}(dd^cv,\mathbb B_p(a,r_0+\varepsilon),r)\\
        \leq \ds c(n,m,p)\nu_{m-q_{m,p}}(dd^cv,\mathbb B_p(a,r_0+\varepsilon),r)
    \end{array}$$
    where $$c(n,m,p)=\pi^p\frac{\Gamma\left(n-\frac{n}{m}+1\right)}{\Gamma\left(n-\frac{n}{m}+p+1\right)}.$$
    By Corollary \ref{c4.3}, if we pass to the limit when $r\to0$, we obtain 
    \begin{align*}
        \int_{\mathbb B_p(a,r_0)}\nu_m(dd^cv,(x',0))\beta'^p(x')&\leq c(n,m,p)\nu_{m-q_{m,p}}(dd^cv,\mathbb B_p(a,r_0+\varepsilon),0)\\
        &\leq\ds c(n,m,p)\binom{n-1}{p}\int_{\mathbb B_p(a,r_0+\varepsilon)}\nu_{m-q_{m,p}}(\langle dd^cv,\pi,x'\rangle, (x',0))\beta'^p(x')
    \end{align*}
    for every $0<\varepsilon<r_0$ small enough. Thus, we conclude that 
    \begin{equation}\label{eq4.3}
        \int_{\mathbb B_p(a,r_0)}\nu_m(dd^cv,(x',0))\beta'^p(x')\leq \ds c(n,m,p)\binom{n-1}{p}\int_{\mathbb B_p(a,r_0)}\nu_{m-q_{m,p}}(\langle dd^cv,\pi,x'\rangle, (x',0))\beta'^p(x').
    \end{equation}
    On the other hand, using the same competitions, for every $0<r<\varepsilon$, one can find 
    $$\int_{\mathbb B_p(a,r_0)}\nu_m(dd^cv,(x',0),r)\beta'^p(x')\geq c(n,m,p)\binom{n-1}{p}\int_{\mathbb B_p(a,r_0-\varepsilon)}\nu_{m-q_{m,p}}(\langle dd^cv,\pi,x'\rangle, (x',0))\beta'^p(x').$$
    Again, this yields to 
    \begin{equation}\label{eq4.4}
        \int_{\mathbb B_p(a,r_0)}\nu_m(dd^cv,(x',0),r)\beta'^p(x')\geq c(n,m,p)\binom{n-1}{p}\int_{\mathbb B_p(a,r_0)}\nu_{m-q_{m,p}}(\langle dd^cv,\pi,x'\rangle, (x',0))\beta'^p(x').
    \end{equation}
    Inequalities \eqref{eq4.3} and \eqref{eq4.4} give the equality in  the first assertion.\\
    For the second assertion, we proceed as in the previous case, to obtain  
    $$\nu_{m-q_{m,p}}(dd^cv,\mathbb B_p(a,r_0-\varepsilon),r)\leq d(n,m,p)r^{2\left(\frac{n-p}{m-q_{m,p}}-\frac{n}{m}\right)} \int_{\mathbb B_p(a,r_0)}\nu_m(dd^cv,(x',0),r)\beta'^p(x')$$
    for every $0<r<\varepsilon<r_0$, where $$d(n,m,p)=\frac{\Gamma\left(n-\frac{n-p}{m-q_{m,p}}+p+1\right)}{\pi^p\Gamma\left(n-\frac{n-p}{m-q_{m,p}}+1\right)}.$$    
    Since $$\frac{n-p}{m-q_{m,p}}-\frac{n}{m}>0,$$
    we conclude that 
    $$\binom{n-1}{p}\int_{\mathbb B_p(a,r_0-\varepsilon)}\nu_{m-q_{m,p}}(\langle dd^cv,\pi,x'\rangle, (x',0))\beta'^p(x')=\nu_{m-q_{m,p}}(dd^cv,\mathbb B_p(a,r_0-\varepsilon),0)=0.$$
    Therefore, $\nu_{m-q_{m,p}}(\langle dd^cv,\pi,x'\rangle, (x',0))=0$ for almost every $x'\in\mathbb B_p(a,r_0-\varepsilon)$ and the proof is achieved by the arbitrariness of the ball $\mathbb B_p(a,r_0-\varepsilon)$ in $\Omega'$.
\end{proof}

{\bf Acknowledgments.} \textit{The authors thank the anonymous referee for their careful review and constructive suggestions that improved the paper. Special thanks are also due to Professor Hassine El Mir for helpful discussions.
} \\

{\bf Financial interests.} \textit{The authors declare they have no financial interests.}\\

{\bf Conflict of interest.} \textit{The authors declare they have no conflict of interest.}\\

{\bf Data availability.} \textit{No datasets were generated or analyzed during the current study.}

\end{document}